\newtheorem{theorem}{Theorem}%[section]
\newtheorem{lemma}[theorem]{Lemma}
\newtheorem{cor}[theorem]{Corollary}
\newtheorem{prop}[theorem]{Proposition}
\theoremstyle{definition}
\newtheorem{definition}[theorem]{Definition}
\theoremstyle{remark}
\newtheorem{remark}[theorem]{Remark}
\newtheorem{ex}[theorem]{Example}
\numberwithin{equation}{section} \font\bbb=msbm10 scaled 1100
\newcommand{\ncmnd}{\newcommand}
\ncmnd{\nthm}{\newtheorem}
\theoremstyle{definition}
\theoremstyle{remark}
\newcommand{\R}{\mbox{\bbb R}}
\newcommand{\real}{\mbox{\bbb R}}
\newcommand{\euc}{\mbox{\bbb E}}
\newcommand{\rest}[2]{{\left.{#1}\right\vert_{{#2}}}}
\newcommand{\Domain}{{\mathcal D}}      % DOMAIN
\newcommand{\catzero}{{\mbox{\sc cat($0$)}}}
\newcommand{\catk}{{\mbox{\sc cat($K$)}}}
\newcommand{\catone}{{\mbox{\sc cat($1$)}}}
\begin{document}

\title[TOTAL CURVATURE AND SIMPLE PURSUIT] {TOTAL CURVATURE AND SIMPLE PURSUIT ON DOMAINS
OF CURVATURE BOUNDED ABOVE}
\author{S. Alexander}
\address{Department of Mathematics,
        University of Illinois, Urbana IL, 61801}
\email{sba@math.uiuc.edu}

\author{R. Bishop}
\address{Department of Mathematics,
        University of Illinois, Urbana IL, 61801}
\email{bishop@math.uiuc.edu}

\author{R. Ghrist}
\address{Departments of Mathematics and Electrical/Systems Engineering,
        University of Pennsylvania, Philadelphia PA, 19104}
\email{ghrist@math.upenn.edu}

\begin{abstract}
We show how circumradius and asymptotic behavior of curves in
\catzero\ and \catk\,  spaces ($K>0$) are controlled by growth rates
of total curvature.  We apply our results to pursuit and evasion games
of capture type with simple pursuit motion, generalizing results that
are known for convex Euclidean domains, and obtaining  results that
are new for convex Euclidean domains and hold on playing fields
vastly more general than these.
\end{abstract}

\subjclass{91A24, 49N75}

\keywords{Pursuit-evasion games, CAT(0) geometry, bounded
curvature, total curvature, pursuit curve.}

\maketitle

% ************************************************************************
\section{Introduction}
\label{sec_Intro}
% ************************************************************************

The goals of this paper are twofold:
\begin{enumerate}
\item We study {\em total curvature} of a curve (the integral of its
    curvature) in spaces of curvature bounded above, and relate the
    total curvature, the curve's circumradius function, the
    asymptotic behavior of the curve, and the domain's curvature
    bound.
\item We apply these results to a foundational problem in {\em
    pursuit-evasion games}, where an evader moves in a domain and
    is followed by a pursuer along a {\em pursuit curve}. We study
    the {\em capture problem:} whether the pursuer ever catches
    (comes sufficiently close to) the evader. Although our total
    curvature and circumradius results are new even for convex
    Euclidean playing fields, our playing fields are vastly more
    general than these.
\end{enumerate}

We assume that the reader is familiar with the basic notions of \catk\
and Alexandrov geometry (see, e.g., \cite{BH99,BBI01}) as well as the
theme that results which are true in Riemannian spaces of sectional
curvature bounded above are often true --- and often have more
transparent proofs --- in the broader class of \catk\ spaces.
Alternatively,  the reader may consult the short appendix containing
the definitions and basic tools that we use.  We hope that readers
based in comparison geometry will find both the theorems on the
asymptotics of total curvature, and their applications to the capture
problem, of interest;  and readers interested in pursuit-evasion games
will find the power of comparison geometry compelling.

%%%%%%%%%%% ==================================================================

\subsection{Motivation}

The application to pursuit-evasion games requires some
motivation and background. There is a significant literature on
pursuit and evasion games with natural motivations coming from
robotics, control theory, and defense applications
\cite{Guibas,ISS,Vidal}. Such games involve one or more {\em
evaders} in a fixed domain being hunted by one or more {\em
pursuers} who win the game if the appropriate capture criteria
are satisfied. Such criteria may be physical capture (the
pursuers move to where the evaders are located)
\cite{Isaacs,IKK,Parsons} or visual capture (there is a
line-of-sight between a pursuer and an evader)
\cite{Guibas,SY92}. The types of pursuit games are many and
varied: continuous or discrete time, bounded or unbounded
speed, and constrained or unconstrained acceleration, energy
expenditure, strategy, and sensing. For a quick introduction to
the literature on pursuit games, see, e.g., \cite{KR,Guibas}.

The applications in this paper focus on one particular variable in
pursuit games: the geometry and topology of the domain on which the
game is played. We keep all other variables fixed and as simple as
possible. Thus, there will be a single pursuer-evader pair, a single
simple pursuit strategy (`move toward the evader'), and no
constraints on acceleration or related system features.

The vast majority of the known results on pursuit-evasion are
dependent on having Euclidean domains which are two-dimensional
or, if higher-dimensional, then convex. There has of late been a
limited number of results for pursuit games on surfaces of revolution
\cite{HM00}, cones \cite{Mel98}, and round spheres \cite{Kov96}. Our
results are complementary to these, in the sense that we work with
domains of arbitrary dimension, with no constraints on being either
smooth or locally Euclidean.

There are several reasons for wanting to extend the study of
pursuit-evasion games to the most general class of playing fields
possible. The most obvious such application is in the generalization
from 2-d to 3-d, in which the pursuit game is a model for physical
pursuit, as well as in the expansion to nonconvex domains.    For
example, a closed, simply connected domain $\Domain$ with smooth
boundary in $\euc^3$ is \catzero\ if the tangent plane at every
boundary point $p$ contains points arbitrarily close to $p$ that are
not in the interior of $\Domain$. (This is a special case of the
characterization of upper curvature bounds of manifolds with
boundary in \cite{ABB93}.)  More generally, a domain in $\euc^3$ is
\catone\ if it is not too far from convex, that is, its boundary is not
too outwardly curved;  see Theorem \ref{thm:catone-example} below.

However, higher dimensional playing fields can also correspond to
physical problems, via {\em configuration spaces} of physical
systems. Consider the following (fanciful) example. If one
wants to mimic the action of a dancer with a complex robot, one
could attempt a generalized pursuit game in which the playing
field is the configuration space of the dancer's (or robot's)
motions. The dancer's configuration plays the role of the
evader, and the robot's configuration plays the role of the
pursuer. If the robot's goal is to mimic the dancer in real
time with knowledge only of the dancer's instantaneous body
configuration, then this translates into a simple pursuit
problem with one pursuer and one evader. The results of this
paper show that no matter how high the dimension of the
configuration space, the pursuit strategy will be successful if
the configuration space is \catzero. It has been demonstrated
recently that there is a significant class of configuration
spaces in robotics and related fields which do have an
underlying \catzero\ geometry \cite{AG,BHV,GL,GP}, rendering
the cartoon example above a little less unrealistic. In like
vein, work on consensus, rendezvous, and flocking \cite{TJP} is
a form of coordinated pursuit in which the evader is the
consensus or rendezvous state(s).

% ************************************************************************
\section{Total curvature}
\label{sec_TotalCurvature}
% ************************************************************************

For a curve in a \catzero\ space, successively stronger constraints on
the total curvature function control  long-term behavior.

% =======================================================================
\subsection{Definitions}

\begin{definition}\label{def:tc}
The \emph{total rotation} $\tau_\sigma$ of a polygonal (i.e.,
piecewise-geodesic) curve $\sigma$ is $\sum_i {(\pi - \beta_i)}$,
where the $\beta_i\geq 0$ are the angles at the interior vertices. The
\emph{total curvature} of any curve $\gamma$ is the $\limsup$ of
$\tau_\sigma$ as $\mu_\sigma\to 0$, over all polygonal $\sigma$
inscribed in $\gamma$, where $\mu_\sigma$ is the maximum segment
length of $\sigma$.
\end{definition}

In \catzero\  spaces, monotonicity of total rotation follows from
triangle comparisons: if $\sigma$ is inscribed in a polygonal curve
$\gamma$, then $\tau_\sigma\le\tau_\gamma$ \cite{AB98}. Thus the
total curvature of any curve $\gamma$ in a \catzero\ space is the
supremum of $\tau_\sigma$ over all polygonal $\sigma$ inscribed in
$\gamma$.  Monotonicity of total rotation fails in \catk\ spaces for
$K>0$, but a more subtle argument proves that the total curvature of
an arbitrary curve is the limit of $\tau_{\sigma_n}$ for any sequence
$\sigma_n$ of inscribed polygonal curves  with $\mu_\sigma\to 0$
\cite[see Theorem \ref{thm:tc-limit} below]{ML03}.

In particular, the  total rotation $\tau_\sigma$ of a polygonal curve
coincides with its total curvature. Accordingly: \emph{from now on,
we denote the total curvature of an arbitrary curve $\gamma$ in a
\catk\ space by  $\tau_\gamma$}.

\begin{ex}
If $\gamma$ is a unitspeed curve in $\euc^n$, then $\tau_\gamma$
equals the length  in the unit sphere of the curve $\gamma^{'+}$ of
righthand unit tangent vectors, with jump discontinuities replaced by
great circular arcs \cite{AR}. In particular, if $\gamma$ is smooth in
$\euc^2$, so that $\gamma'(t)=(\cos \theta (t),\sin \theta (t))$, then
$\tau_\gamma = \int \kappa$, where $\kappa = |\gamma''| =
|\theta'|$.
\end{ex}

Curves of finite total curvature in
\catk\ spaces are well-behaved, in the sense that they have
unit-speed parametrizations, which have left and right unit velocity
vectors at every point  \cite{ML03}.

We are interested in how the asymptotic behavior of the \emph{total
curvature function},
$$\tau(t) = \tau_{\rest{\gamma}{[0,t]}},$$
controls the function that measures the maximum distance from its
initial point realized by the curve in a given time period:

\begin{definition}
The \emph{circumradius function} of a curve $\gamma$ is the
real-valued function $c$, where $c(t)$ is the smallest number such
that the path $\rest{\gamma}{[0,t]}$ lies in the ball of radius
$c(t)$ about $\gamma(0)$.
\end{definition}

% =======================================================================
\subsection{Growth rate of total curvature and circumradius}
The following theorem will be applied in Section \ref{sec_Escape} to
pursuit-and-evasion games, to obtain a necessary condition for the
evader to win, in terms of how far from home the evader wanders
during given time periods.  Theorem \ref{thm:unbounded} generalizes
a theorem of Dekster for Riemannian manifolds \cite{Dek80}.
However, we use Reshetnyak majorization to obtain a simple
argument that moreover holds for any \catzero\ domain.

\begin{theorem}\label{thm:unbounded}
For any curve $\gamma$, parametrized by arclength $t$, in a
\catzero\ space, let $\tau$  and $c$ be its total curvature and
circumradius functions.
\begin{enumerate}
\item[(a)] If  $\liminf_{t\to\infty} \tau(t)/t =0$, then $\gamma$
is unbounded.
\item[(b)] If $\tau\in O(t^a)$ for some $a\in (0,1)$, then
$c\in\Omega(t^{1-a})$.
\end{enumerate}
\end{theorem}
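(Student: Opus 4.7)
The plan is to reduce both parts of the theorem to a single Schur-type lemma: in a \catzero\ space, any unit-speed curve of length $L$ contained in a ball $B(p, R)$ has total curvature at least $L/R - \pi$. Granted this lemma, part (a) follows at once, since boundedness of $\gamma$ would give $c(t) \le R_0$ uniformly and hence $\tau(t) \ge t/R_0 - \pi$, contradicting $\liminf_{t \to \infty} \tau(t)/t = 0$. Part (b) is equally direct: applying the lemma with $R = c(t)$ and rearranging yields $c(t) \ge t/(\tau(t) + \pi) \ge t/(Ct^a + \pi)$, which lies in $\Omega(t^{1-a})$.

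The first step in proving the lemma is to establish Schur's inequality in \catzero: for any curve $\sigma$ of length $\ell$ and total curvature $\tau \le \pi$, the endpoints satisfy $d(\sigma(0), \sigma(\ell)) \ge 2(\ell/\tau)\sin(\tau/2) \ge 2\ell/\pi$. I would apply Reshetnyak majorization to the closed curve formed by $\sigma$ together with the geodesic chord from $\sigma(\ell)$ back to $\sigma(0)$: this produces a convex planar region whose boundary splits into a planar arc $\bar\sigma$ of length $\ell$ and a straight segment of length $d(\sigma(0), \sigma(\ell))$ (the latter arises because the 1-Lipschitz majorization preserves arclength on the boundary and sends the chord to a geodesic). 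Since Reshetnyak majorization enlarges interior angles at corresponding corners, the total rotation of $\bar\sigma$ is at most $\tau$, and classical planar Schur then bounds the Euclidean chord from below, yielding the desired inequality in $X$.

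With Schur available, the key lemma follows by partitioning $[0, L]$ into consecutive subintervals on each of which $\gamma$ has total curvature exactly $\pi$ (with the possible exception of the last piece). Schur on each piece gives chord length $\ge 2\ell_i/\pi$; since both endpoints lie in $B(p, R)$, this chord is also at most $2R$, forcing $\ell_i \le \pi R$. Summing yields $L \le N\pi R$ where $N$ is the number of pieces, so $N \ge L/(\pi R)$ and $\tau_\gamma \ge (N-1)\pi \ge L/R - \pi$. The degenerate case $\tau_\gamma < \pi$ is handled by applying Schur directly to the whole curve to obtain $L/R \le \pi$, so the lemma holds trivially.

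The main technical obstacle is that the cumulative total curvature $s \mapsto \tau_{\gamma|_{[0, s]}}$ may have jumps at corners of $\gamma$, so exact partitioning with $\tau_i = \pi$ on every piece is not always possible. Since any single jump is at most $\pi$ (a \catzero\ exterior angle cannot exceed $\pi$), one can still select partition points so that every subinterval has total curvature in a controlled range close to $\pi$, with any slack absorbed into the additive constant; this adjustment is cosmetic and does not affect the asymptotic conclusions (a) or (b).
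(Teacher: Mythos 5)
Your overall architecture is the same as the paper's: cut the curve into pieces of bounded total curvature, apply Reshetnyak majorization to each piece plus its chord to reduce to a planar comparison, and convert a length-versus-chord ratio into the estimate $t \lesssim (\tau(t)+1)\cdot\sup_i|\sigma_i|$, from which (a) and (b) both follow. However, the quantitative heart of your argument --- the ``Schur-type'' bound $d(\sigma(0),\sigma(\ell)) \ge 2(\ell/\tau)\sin(\tau/2)$ for any arc of length $\ell$ and total curvature $\tau\le\pi$ --- is false. A hairpin consisting of two segments of length $\ell/2$ meeting at exterior angle $\tau$ is a convex planar arc of total curvature $\tau$ whose chord is $\ell\cos(\tau/2)$, which tends to $0$ as $\tau\to\pi$, while your claimed lower bound tends to $2\ell/\pi$. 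Classical Schur compares a convex arc to a circular arc of \emph{pointwise} greater curvature; it does not give a chord bound in terms of \emph{total} curvature alone, and the correct total-curvature bound is $d\ge\ell\cos(\tau/2)$ (the tangent indicatrix has length $\tau$, hence lies in an arc of width $\tau$). Since this degenerates at $\tau=\pi$, your partition into pieces of total curvature exactly $\pi$ gives no control whatsoever on $\ell_i$: a piece can be an arbitrarily long hairpin with both endpoints at the same point of $B(p,R)$, and the inequality $\ell_i\le\pi R$ fails.

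The fix is exactly what the paper does: partition into pieces of total curvature at most some threshold strictly below $\pi$ (the paper uses $\pi/2$, for which $1/\cos(\pi/4)=\sqrt2$ bounds the length-to-chord ratio), at the cost of roughly doubling the number of pieces; your final lemma then survives in the form $\tau_\gamma\ge cL/R - C$ with worse constants, which is all that (a) and (b) require. A second, smaller point: your remark that jumps in the cumulative total curvature are ``cosmetic'' undersells the issue once the threshold is below $\pi$, since a single vertex angle can contribute a jump of up to $\pi$ and therefore cannot be contained in any piece of total curvature $\le\pi/2$; the paper handles this by first cutting across each such corner with a short geodesic segment to split one large exterior angle into two smaller ones. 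With these two repairs your proof becomes essentially the paper's.
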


\begin{ex}
Consider the spiral $\gamma(u)=(u\cos 2\pi u, u\sin 2\pi u)$ in
$\euc^2$.  The total curvature function is linear in $u$, as is the
circumradius, while the arclength $t$ grows quadratically. This is
case (b) for $a = \frac{1}{2}$, with $t/c(t)^2$ bounded.
\end{ex}

\begin{proof}[Proof of Theorem \ref{thm:unbounded}]
For part (a), we may suppose by approximation that any fixed initial
segment $\rest{\gamma}{[0,t]}$ is polygonal. Subdivide $[0,t]$ into
at most $\frac{\tau(t)}{\pi/2} + 1$ subintervals so that the
restriction $\gamma_i$ of $\gamma$ to each subinterval has total
curvature at most $\pi/2$. (If any angles are less than $\pi/2$, we
first refine the polygon by cutting across each such angle with a
short segment to obtain two angles of at least $\pi/2$. )  Let
$\rho_i$ be the closed polygon consisting of $\gamma_i$ and its
chord $\sigma_i$. By Reshetnyak majorization, there is a closed
convex curve $\widetilde{\rho}_i$ in $\euc^2$ that majorizes
$\rho_i$. Since a majorizing map preserves geodesics and does not
increase angles, $\widetilde{\rho}_i$ is a closed polygon with the
same sidelengths as $\rho_i$, consisting of a polygonal curve
$\widetilde{\gamma}_i$ and its chord $\widetilde{\sigma}_i$, where
the total curvature of $\widetilde{\gamma}_i$ is at most $\pi/2$.

Since $\widetilde{\gamma}_i$ is a convex curve in $\euc^2$ having
total curvature at most $\pi/2$, the ratio of its length to that of
its chord is at most $\sqrt{2}$ (the ratio of two sides of an
isosceles right triangle to its hypotenuse).  Therefore
\begin{equation}\label{eq:arc-estimate}
t \le \left(\frac{\tau(t)}{\pi/2}+1\right)\sqrt2 \sup |\sigma_i|,
\end{equation}
so
\begin{equation*}
\frac{\tau(t)}{t} \ge \frac{\pi}{2}\left(\frac{1}{\sqrt2\sup
|\sigma_i|} - \frac{1}{t}\right) .
\end{equation*}
But if $\gamma$ is bounded, so that $\sup |\sigma_i|<\infty$, it
follows that $\tau(t)/t$ is bounded away from $0$ for $t$
sufficiently large. This proves Part (a).

For Part (b), if one substitutes $\tau(t) \le At^a$ and
$\sup|\sigma_i| \le 2c(t)$ in (\ref{eq:arc-estimate}), it is
immediate that $t/c(t)^{1/(1-a)}$  is bounded.
\end{proof}

% =======================================================================
\subsection{Finite total curvature and asymptotic rays}

Total curvature also controls how close an infinite curve of finite
total curvature must be to a geodesic ray.  In the Riemannian
setting, the conclusions of the following theorem were obtained
by Langevin and Sifre \cite{LS} under stronger hypotheses.  That is,
they assume the pointwise curvature $\kappa(t)$ of a smooth curve
$\gamma$ satisfies $\kappa(t)\in O(t^{-1-\epsilon})$ in Part (a),
and the same with $t^{2+\epsilon}$ in Part (b). Here again, we give
a simple argument using \catzero\ techniques.

A curve $\gamma$ is said to be \emph{asymptotic} to a geodesic ray
$\sigma$ if $d(\gamma(t), \sigma)$ is bounded.  Now we show  that a
curve of finite total curvature $\tau_\gamma$ always has sublinear
distance to some geodesic ray, to which it is asymptotic if the total
curvature function approaches its limit $\tau_\gamma$ sufficiently
rapidly.

\begin{theorem}\label{thm:asymptotic}
Let $\gamma$ be a curve, parametrized by arclength $t$, in a
\catzero\ space. Suppose $\gamma$ has finite total curvature $
\tau_\gamma = \lim_{t\to\infty} \tau(t)$.
\begin{enumerate}
\item[(a)]
    Through any point $p$, there is geodesic ray $\sigma$ such
    that $d(\gamma(t),\sigma)\in o(t)$.
\item[(b)] The circumradius function satisfies $c\in\Omega(t)$.
\item[(c)]
    If $\int_0^\infty(\tau_\gamma - \tau(t)) dt < \infty$, then
    $\gamma$ and $\sigma$ are asymptotic.
\end{enumerate}
\end{theorem}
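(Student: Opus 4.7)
I prove (b) first as an immediate corollary of Theorem \ref{thm:unbounded}, then construct the ray and establish the $o(t)$ bound for (a), and finally strengthen the estimate to (c) under the integrability hypothesis. For (b): since $\tau_\gamma<\infty$, the number of subintervals into which $\gamma|_{[0,t]}$ is partitioned in the proof of Theorem \ref{thm:unbounded} is bounded by $N=\lceil 2\tau_\gamma/\pi\rceil+1$, independent of $t$. Inequality \eqref{eq:arc-estimate} with $\sup|\sigma_i|\le 2c(t)$ then gives $t\le 2\sqrt{2}\,N\,c(t)$, so $c\in\Omega(t)$.

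The central tool for (a) is the \emph{chord-tangent inequality}, which in \catzero\ spaces follows from Reshetnyak majorization: at each $\gamma(s)$, the angle between the forward tangent $\gamma'^+(s)$ and the chord $[\gamma(s),\gamma(t)]$ (for $t>s$) is at most $\tau(t)-\tau(s)$, and analogously on the backward side. Fix $\epsilon>0$ and choose $T$ with $\tau_\gamma-\tau(T)<\epsilon/2$. For $T<s<t$, combining the forward and backward chord-tangent estimates with $\angle(\gamma'^+(s),-\gamma'^-(s))=\pi$ (at regular $s$) yields $\angle_{\gamma(s)}(\gamma(T),\gamma(t))\ge\pi-\epsilon$. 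The \catzero\ angle-sum inequality on the triangle $\gamma(T),\gamma(s),\gamma(t)$ then forces $\angle_{\gamma(T)}(\gamma(s),\gamma(t))\le\epsilon$, so the chord directions $[\gamma(T),\gamma(s)]$ are Cauchy as $s\to\infty$ in the space of directions at $\gamma(T)$; their limit defines a geodesic ray $\sigma_T$ from $\gamma(T)$, and $\sigma$ is the unique ray from $p$ asymptotic to $\sigma_T$.

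For the distance bound in (a), apply Reshetnyak majorization to $\gamma|_{[T,t_n]}$ together with its chord: the planar majorizer is a convex curve of total curvature $<\epsilon/2$, whose tangent field stays within a cone of aperture $<\epsilon/2$, yielding perpendicular deviation at parameter $t$ of at most $(t-T)\epsilon$. The 1-Lipschitz majorizer gives $d(\gamma(t),[\gamma(T),\gamma(t_n)])\le(t-T)\epsilon$; letting $t_n\to\infty$ and using $d(\sigma_T,\sigma)\le d(\gamma(T),p)$ (in \catzero, asymptotic rays satisfy this by convexity of distance functions) yields $\limsup_{t\to\infty}d(\gamma(t),\sigma)/t\le\epsilon$, hence $d(\gamma(t),\sigma)\in o(t)$. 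For (c), the same majorization combined with the sharper pointwise estimate $\theta(s)\le\tau_\gamma-\tau(s)$ on the angle between $\gamma'^+(s)$ and the asymptotic direction gives, in the Euclidean model, $d(\widetilde\gamma(t),\widetilde\sigma)\le\int_0^t\sin\theta(s)\,ds\le\int_0^\infty(\tau_\gamma-\tau(s))\,ds<\infty$; the 1-Lipschitz majorizer transports this to $X$, establishing asymptoticity.

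The main obstacle will be the careful handling of Reshetnyak majorization when $\tau_\gamma$ is large: a single majorization is valid only when the closed curve formed by the path and its chord has total rotation at most $2\pi$, so one must partition $[0,\infty)$ into pieces of small total curvature, as in Theorem \ref{thm:unbounded}, and obtain the estimates piecewise. A secondary subtlety is that the Cauchy chord argument must be based at $\gamma(T)$ rather than at $p$ (since the initial segment of $\gamma$ may carry most of its total curvature, so the direction from $\gamma(s)$ back to $p$ is not controlled by the tail curvature), requiring one to identify the resulting asymptotic direction with a ray from $p$ via the standard \catzero\ fact that every asymptotic equivalence class of rays has a representative from each point.
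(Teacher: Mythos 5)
Your overall strategy is sound and, for parts (a) and (b), genuinely different from the paper's. The paper proves (a) by developing the whole curve into the plane: it majorizes each closed polygon $\rho_i$ (formed by $\gamma|_{[t_{i-1},t_i]}$ and the two chords from $\gamma(0)$) and assembles the planar majorizers into a ``fan'' around a single center $\widetilde O$; finiteness of $\tau_\gamma$ bounds the total angle of the fan at $\widetilde O$, so the planar chords converge to a Euclidean ray, the planar curve is a graph of height $o(t)$ over that ray, and both the existence of the limit ray $\sigma$ in $X$ and the estimate $d(\gamma(t),\sigma)\le d(\widetilde\gamma(t),\widetilde\sigma)$ are pulled back through the majorizations. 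Your route instead works intrinsically via the tangent--chord angle estimate and the \catzero\ angle-sum inequality, and identifies the limit as a point at infinity. Your proof of (b) directly from inequality (\ref{eq:arc-estimate}) is clean and correct (and arguably tidier than the paper's, which extracts $r(t)>At$ from the first-variation computation inside the proof of (a)); your (c) is essentially the paper's argument (integrate the sine of the angle to the asymptotic direction, bounded by $\tau_\gamma-\tau(t)$). One side remark: your stated ``main obstacle'' is not one --- in a \catzero\ space Reshetnyak majorization applies to \emph{any} rectifiable closed curve, with no restriction on total rotation; the partitioning in Theorem \ref{thm:unbounded} is needed for the length-versus-chord ratio $\sqrt2$, not for the majorization itself.

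The one genuine gap is the construction of the limit ray. Your angle estimate gives $\angle_{\gamma(T)}(\gamma(s),\gamma(t))\le\epsilon$ for $T<s<t$ only when $T=T(\epsilon)$; at a \emph{fixed} basepoint $\gamma(T)$ this shows the directions to $\gamma(s)$ have oscillation bounded by a fixed $\epsilon$, not that they are Cauchy, so as written the sequence of directions could oscillate forever inside an $\epsilon$-ball. Moreover, even Cauchy directions would not by themselves force the geodesic segments $[\gamma(T),\gamma(s)]$ to converge to a ray in a general \catzero\ space. The repair is standard but must be done: from the angle bound at $\gamma(s)$ and \catzero\ comparison one gets $d(\gamma(s),[\gamma(T),\gamma(t)])\le 2\epsilon(s-T)$ for all $t>s$, hence (using convexity of $u\mapsto d(c_1(u),c_2(u))$ and the radial growth $d(\gamma(T),\gamma(t))\ge At$) the chords $[\gamma(T),\gamma(t)]$ and $[\gamma(T),\gamma(t')]$ differ at arclength $R$ by at most $CR\epsilon'+o(1)$ once one passes to a finer scale $T'=T(\epsilon')$; a two-scale/diagonal argument over $\epsilon'\to0$ then gives uniform convergence on compacta to a ray $\sigma_T$. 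This is exactly the step the paper's fan construction makes automatic: there the $\widetilde\sigma_i$ visibly converge in $\euc^2$ and majorization transports the convergence back to $X$. With that step filled in (and a one-line remark that corner angles beyond $T$ contribute at most $\tau_\gamma-\tau(T)$ to your ``regular point'' estimate), your argument goes through.
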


\begin{proof}
Again, we may assume $\gamma$ is polygonal. Choose an increasing
sequence $t_i \to \infty, i \ge 0, t_0 = 0$. Let $\sigma_i$ be the
geodesic joining $\gamma(0)$ and $\gamma(t_i)$. For $i \ge 1$, let
$\rho_i$ be the closed polygon made up of $\sigma_{i-1}, \gamma_i =
\rest{\gamma}{[t_{i-1},t_i]}$, and $\sigma_i$. We denote by
$\widetilde{ \rho}_i$ a convex polygon in $\euc^2$ that majorizes
$\rho_i$, and by $\widetilde{\gamma}_i$ its subarc corresponding to
$\gamma_i$. We suppose the $\widetilde{\rho}_i$ are arranged in a
counterclockwise ``fan'', so that $\widetilde{\rho}_{i+1}$
intersects $\widetilde{\rho}_i$ along the straight line segment
$\widetilde{\sigma}_i$ in each that corresponds to $\sigma_i$, and
the points corresponding to $\gamma(0)$ coincide at the centerpoint
$\widetilde{O}$.

Consider the polygonal curve $\widetilde{\gamma} : [0,\infty) \to
\euc^2$, parametrized by arclength, whose image is the union of the
convex curves $\widetilde{\gamma}_i$. The restriction of the
majorizing map of $\widetilde{\rho}_i$ to $\widetilde{\gamma}_i$
maps onto $\gamma_i$ and does not increase angles at vertices. Thus
it does not decrease total curvature. Moreover, the interior angles
of $\widetilde{\rho}_i$ at $\widetilde{\gamma}(t_{i-1})$ and
$\widetilde{\gamma}(t_i)$ are at least those of $\rho_i$ at the
corresponding points. Let $\widetilde{\beta}_i$ be the sum of the
angles of $\widetilde{\rho}_i$ and $\widetilde{\rho}_{i+1}$ at
$\widetilde{\gamma}(t_i)$. By the triangle inequality for angles in
$M$, $\widetilde{\beta}_i$ is at least the angle between the left
and right directions of $\gamma$ at $t_i$. Thus the angle between
the initial and final tangents of
$\rest{\widetilde{\gamma}}{[t_m,t_n]}$ in $\euc^2$, namely, the sum
of the $(\pi - \widetilde{\beta}_i)$, which could be negative,  and
the positive  total curvatures of the convex polygonal curves
$\widetilde{\gamma}_i$ for $m+1 \le i \le n$, is no more than the
total curvature of $\rest{\gamma}{[t_m,t_n]}$. Now take $t_m = 0$.
Since $\widetilde{\gamma}(0)=\widetilde{O}$, the total angle at
$\widetilde{O}$ of the first $n$ sectors of the fan is no more than
the angle between the initial and final tangents of
$\rest{\widetilde{\gamma}}{[0,t_n]}$, and hence no more than
$\tau_\gamma$. Therefore the vertex angles of the
$\widetilde{\rho}_i$ are summable, and the angle between
$\widetilde{\gamma}$ and the ray from $\widetilde{O}$ through
$\widetilde{\gamma}(t)$ converges to $0$.

Let  $\widetilde{r}(t)=d(\widetilde{O},\widetilde{\gamma}(t))$.  By
the First Variation Formula (\ref{first-variation}), the one-sided derivatives
$d\widetilde{r}/dt$ converge to $1$. Thus for any $A < 1$ we have
$\widetilde{r}(t)$ increasing and $\widetilde{r}(t) > At$ for $t$
sufficiently large. Furthermore, for each choice of sequence $t_i \to
\infty$ our construction produces a function $\widetilde{r}$ satisfying
$\widetilde{r}(t_i) = r(t_i)$, where $r(t)=d(O,\gamma (t))$. It follows
that $r(t)$ also eventually increases and $r(t)> At$ for $t$
sufficiently large.

Since the directions of the line segments $\widetilde{\sigma}_i$ at
their basepoint $\widetilde{O}$ converge and $|\widetilde{\sigma}_i|
\to \infty$, the $\widetilde{\sigma}_i$ converge to a Euclidean ray
$\widetilde{\sigma}$ from $\widetilde{O}$. Let $s$ be the arclength
parameter on $\widetilde{\sigma}$. Since the angle at which a ray
strikes $\widetilde{\gamma}$ converges to $0$, it follows that for
$t$ sufficiently large, $\widetilde{\gamma}$ is the graph of a
height function of order $o(s)$ over $\widetilde{\gamma}$. Hence
$d(\widetilde{\gamma}(t), \widetilde{\sigma}) \in o(s(t)) = o(t)$.

Now since each $\widetilde{\rho}_i$ majorizes $\rho_i$, the
intersections of the geodesics $\sigma_i$ with any ball in $M$ about
$\gamma(0)$ converge to a geodesic. Therefore the $\sigma_i$
converge to a geodesic ray $\sigma$ in $M$. Furthermore,
$d(\widetilde{\gamma}(t), \widetilde{\sigma})$ is realized by a line
segment through the fan, infinitely partitioned by its intersections
with a truncated sequence of the $\widetilde{\sigma}_i$. Since
$\widetilde{\rho}_i$ majorizes $\rho_i$, $\gamma(t)$ is joined to
$\sigma$ by a path of no greater length. Therefore
$d(\gamma(t),\sigma) \le d(\widetilde{\gamma}(t),
\widetilde{\sigma}) \in o(t)$, as claimed in Part (a).

Part (b) follows from (a) trivially, given the linear circumradius
of geodesic rays.

Since $d(\gamma(t),\sigma) \le d(\widetilde{\gamma}(t),
\widetilde{\sigma})$, it suffices for Part (c) to show that the
latter is bounded.  The length of the projection of
$\widetilde{\gamma}$ in $\euc^2$ to  a line normal to
$\widetilde{\sigma}$ is obtained by integrating $\sin\beta(t)$,
where $\beta$ the angle between the righthand tangent of
$\widetilde{\gamma}$ and the direction of $\widetilde{\sigma}$.
Since
\[
\sin\beta(t) \le \beta(t) \le
\tau_{\rest{\widetilde{\gamma}}{[t,\infty]}} \le
\tau_{\rest{\gamma}{[t,\infty]}} \le \tau_\gamma - \tau(t) ,
\]
Part (c) follows.
\end{proof}

\begin{remark}
By alternately gluing Euclidean and hyperbolic bands bounded by pairs
of asymptotic geodesics, one can construct a \catzero\ space in which
the curves of Part (c) need not have {\em strict} asymptotes (the
distance to which approaches $0$ rather than merely being bounded).
This construction is carried out in \cite{LS}, although the \catzero\
nature of the resulting glued space is not mentioned.
\end{remark}

% ************************************************************************
\section{Simple pursuit on \catzero\ domains}
\label{sec_Pursuit}
% ************************************************************************

The following rules define a basic discrete-time equal-speed pursuit
game. (Continuous-time pursuit will be discussed in Section
\ref{sec_continuous}.)  Let $(X,d)$ denote a geodesic metric space
(representing the domain on which the game is played). There is a
single pursuer $P$ and a single evader $E$ starting at locations $P_0$
and $E_0$ respectively. At the $i$-th step, the evader moves from
$E_{i-1}$ to $E_{i}$, a point within distance $D$ chosen by the
evader. The pursuer moves to $P_{i}$, the point along a geodesic
from $P_{i-1}$ to $E_{i-1}$ at distance $D$ from $P_{i-1}$. The
moves are illustrated in Figure \ref{fig_DTCap}. Four points, $P_{i}$,
$E_{i}$,  $E_{i+1}$  and $P_{i+1}$, form a degenerate geodesic
quadrangle with side lengths $L_{i}$, $d(E_i, E_{i+1}) \le D$,
$L_{i+1}$, and $D$, where $L_{i}=d(P_{i},E_{i})$ for each nonnegative
integer $i$.

% FIGURE %%%%%%%%%%%%%%%%%%%%%%%%%%%%%%%%%%%%%%%%%%
\begin{figure}[hbt]
\begin{center}
\psfragscanon
\psfrag{e}[][]{$E_i$}
\psfrag{E}[][]{$E_{i+1}$}
\psfrag{p}[][]{$P_{i}$}
\psfrag{P}[][]{$P_{i+1}$}
\psfrag{a}[][]{$\alpha_i$}
\psfrag{g}[][]{$\varphi_i$}
\psfrag{d}[][]{$\delta_i$}
\psfrag{b}[][]{$\beta_i$}
\includegraphics[width=3.5in]{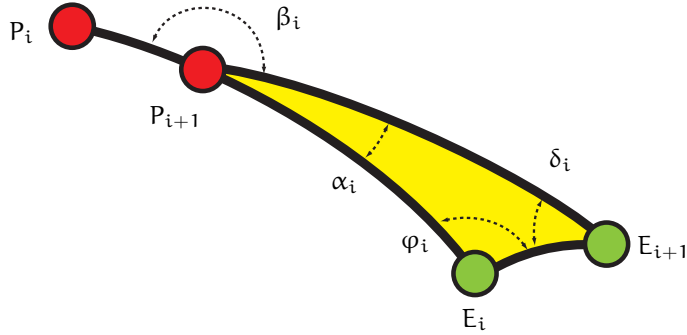}
\caption{A degenerate quandrangle arising from a discrete-time capture
problem.}
\label{fig_DTCap}
\end{center}
\end{figure}
% FIGURE %%%%%%%%%%%%%%%%%%%%%%%%%%%%%%%%%%%%%%%%%%%%%%%%%%%%%%%%%%%%%%%%%%%%

This type of motion, in which the pursuer moves in an unconstrained
fashion in the direction of the evader, is called {\em simple pursuit}.
Given $P_0$ and the sequence $\{E_{i}\}$, we say that $P$ wins if
$d(P_i,E_i)\leq D$ for some $i$; otherwise, $E$ wins. We note that
this instantaneous, memoryless strategy for pursuit is not necessarily
the pursuer's optimal strategy \cite{Sgall,KR} --- merely the simplest.

By the triangle inequality,
\begin{equation}\label{eq:monotone}
L_{i+1}\leq d(P_{i+1},E_i)+D=L_{i}.
\end{equation}
Thus $\lim_{i\to\infty}L_{i}=L$
exists, and the evader wins if and only if this limit is greater than
$D$.  Moreover,
\begin{equation}\label{eq:alpha}
\pi - \beta_i\le\alpha_i\le\widetilde{\alpha}_i,
\end{equation}
where $\alpha_{i}$ is the angle between the geodesics joining
$P_{i+1}$ to $E_i$ and $E_{i+1}$, and $\widetilde{\alpha}_i$ is the
angle corresponding to $\alpha_i$ in the Euclidean triangle with the
same sidelengths as $\triangle E_{i}P_{i+1}E_{i+1}$ (compare
Definition \ref{def:tc}).  The first inequality in (\ref{eq:alpha}) is by
the triangle inequality  for the \emph{angle distance} between the
directions of geodesic segments with a common origin. These
observations highlight the naturality of the \catzero\ definition in the
context of pursuit problems.

A discrete-time {\em pursuit curve} $P(t)$ is obtained by joining the
$P_i$ by geodesic segments, where $t$ has speed $1$.  Thus $P_i =
P(iD)$, where $D$ is the step size.  The discrete-time {\em evader
curve} $E(t)$ is defined similarly;  however, since the evader's step
sizes are assumed $\le D$, on each geodesic segment the parameter
$t$ has $\triangle t=D$ and constant speed $\le 1$.

The following simple result is well known for convex Euclidean
domains. Theorem \ref{thm:unbounded} provides us with the
immediate extension to \catzero\ domains:

\begin{theorem}\label{discrete_compact}
For  discrete-time simple pursuit on a complete \catzero\ domain, the
domain is compact if and only if the pursuer always wins.

\end{theorem}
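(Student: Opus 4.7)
The equivalence has two directions. I will prove the forward direction (compact $\Rightarrow$ pursuer wins), the substantive one, using Theorem~\ref{thm:unbounded}(a); the reverse direction follows by exhibiting an escape strategy along a geodesic ray.

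For the forward direction I will argue by contradiction. Assuming $X$ is compact and the evader wins, the limit $L := \lim_i L_i > D$ exists, and by the monotonicity (\ref{eq:monotone}) the drops $a_i := L_i - L_{i+1} \ge 0$ are summable, with $\sum_i a_i = L_0 - L$. By (\ref{eq:alpha}), the pursuer's turning angle $\pi - \beta_i$ at $P_{i+1}$ is at most the Euclidean comparison angle $\tilde\alpha_i$ at $\tilde P_{i+1}$ in the triangle with sides $L_i - D$, $L_{i+1}$, $d_i$, where $d_i = d(E_i, E_{i+1}) \le D$. The triangle inequality $L_{i+1} \le (L_i - D) + d_i$ forces $d_i \ge D - a_i$, and the Euclidean law of cosines then yields
\[
1 - \cos\tilde\alpha_i \;\le\; \frac{D\,a_i}{(L_i - D)\,L_{i+1}}.
\]
For $i$ large, $L_i, L_{i+1} \to L > D$, so $\sum_i (1 - \cos\tilde\alpha_i) < \infty$; hence $\tilde\alpha_i \to 0$ and $\sum_i \tilde\alpha_i^2 < \infty$. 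Cauchy--Schwarz then gives $\sum_{i < n} \tilde\alpha_i = O(\sqrt{n})$, so the pursuer's polygonal curve has total curvature $\tau_P(t) = O(\sqrt{t})$, and in particular $\tau_P(t)/t \to 0$. Theorem~\ref{thm:unbounded}(a) then forces the pursuer's curve to be unbounded, contradicting compactness of $X$.

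For the reverse direction I will prove the contrapositive. If $X$ is not compact, then---since $X$ is a complete, locally compact CAT(0) domain---$X$ is unbounded, and a standard Arzel\`a--Ascoli argument produces a geodesic ray $\sigma:[0,\infty) \to X$ with $\sigma(0) = P_0$ through any chosen starting point $P_0$. Set $E_0 := \sigma(c)$ for any $c > D$. Since CAT(0) geodesics are unique, the pursuer's simple-pursuit geodesic from $P_i$ to $E_i$ is the subsegment of $\sigma$ between those points, so if the evader plays $E_i := \sigma(c + iD)$ then the pursuer is forced to $P_i = \sigma(iD)$ with $L_i = c > D$ for all $i$. The evader wins, so the pursuer does not always win.

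The main obstacle is the quantitative angle estimate in the forward direction: extracting from the Euclidean law of cosines a bound on $1 - \cos\tilde\alpha_i$ of order $a_i$ rather than of order $D^2$, which a naive bound gives. The key observation is that whenever $a_i$ is small, the triangle inequality forces $d_i$ close to $D$, cancelling the leading term in the numerator $d_i^2 - (L_i - D - L_{i+1})^2$; Cauchy--Schwarz then converts square-summability of the $\tilde\alpha_i$ into the sublinear growth of the total rotation required to invoke Theorem~\ref{thm:unbounded}(a).
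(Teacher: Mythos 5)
Your proof is correct, and its overall skeleton matches the paper's: both directions hinge on Theorem \ref{thm:unbounded}(a) for ``evader wins $\Rightarrow$ pursuit curve unbounded,'' and on running the evader up a geodesic ray for the converse (your explicit appeal to local compactness and Arzel\`a--Ascoli to produce the ray is a touch more careful than the paper, which leaves this implicit in the word ``domain''). Where you genuinely diverge is in how sublinearity of the pursuer's total curvature is extracted. The paper's argument is soft: since $L_{i+1}\to L$ and $d(P_{i+1},E_i)+D\to L$, the comparison triangles degenerate, so $\widetilde\alpha_i\to 0$ and hence $\tau(t)/t\to 0$ --- no rates needed. You instead run a quantitative Euclidean law-of-cosines computation, $1-\cos\widetilde\alpha_i\le Da_i/\bigl((L_i-D)L_{i+1}\bigr)$ with $\sum a_i=L_0-L<\infty$, then $1-\cos x\ge 2x^2/\pi^2$ and Cauchy--Schwarz to get $\tau(t)=O(\sqrt t)$. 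This is essentially the \catzero\ (flat-model) analogue of the spherical law-of-cosines argument the paper saves for Theorem \ref{thm:TCbound} (and Theorem \ref{thm:noncompactcap} via rescaling): same telescoping of the drops $\Delta_i=L_i-L_{i+1}$, same $x^2/5$-type lower bound on $1-\cos$, same Cauchy--Schwarz step. So your forward direction proves more than the theorem needs --- you recover the $O(t^{1/2})$ growth bound inline --- at the cost of a computation the paper deliberately defers; the paper's qualitative version is shorter but yields only $o(t)$. One small quibble of framing: the cancellation in your numerator comes from $d_i^2\le D^2$ against $(D-a_i)^2=\bigl(L_{i+1}-(L_i-D)\bigr)^2$, so the lower bound $d_i\ge D-a_i$ from the triangle inequality is not actually what drives the estimate (it only guarantees the comparison triangle exists); the upper bound $d_i\le D$ is what matters.
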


\proof If the evader wins, then by (\ref{eq:monotone}) we have
$L_{i+1}\to L$ and $d(P_{i+1},E_i)+D\to L$.  Therefore the angle
$\alpha_{i}$ vanishes in the limit, because the same is true for
$\widetilde{\alpha}_i$. By (\ref{eq:alpha}), the total curvature of the
$P$ curve is sublinear.  This curve is unbounded via Theorem
\ref{thm:unbounded}, so the domain is noncompact.

Conversely, a noncompact  \catzero\ domain contains an infinite
geodesic ray, along which the evader and pursuer may move with
constant separation and hence without capture. \qed

We now relate the total curvature functions  $\tau^E(t)$ of the
evader curve $E(t)$ and $\tau^P(t)$ of the pursuer curve $P(t)$.  On
the one hand, the evader may accumulate large total curvature by
zigzagging, without much affecting the pursuer's total curvature. On
the other hand, the pursuer's total curvature may exceed the
evader's:   if  E runs along a geodesic ray and  P  does not start on the
ray, then the evader's total curvature is $0$ and the pursuer's is
positive. The following result makes these observations precise.  The
proof uses only angle comparisons, and is added evidence that simple
pursuit has an affinity for the CAT(0) setting.

\begin{theorem}\label{thm:evader-tc}
For discrete-time simple pursuit on a  \catzero\ domain, the total
curvature functions of evader and pursuer satisfy
$$\tau^P(t)\le \tau^E(t)+\pi.$$
\end{theorem}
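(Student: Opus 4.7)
My plan is to compute the pursuer's rotation at each interior vertex, bound it by the corresponding $\alpha_i$ via the degenerate-quadrangle inequality (\ref{eq:alpha}), then relay from $\alpha_i$ to evader rotations by CAT(0) triangle comparison and the triangle inequality for angles. Since both polygonal paths rotate only at vertices and are geodesic in between, it suffices to verify $\tau^P(nD) \le \tau^E(nD) + \pi$ for every positive integer $n$.

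First I identify the pursuer's interior rotations. At $P_{i+1}$ the direction toward the predecessor $P_i$ is opposite to the direction $P_{i+1}E_i$, because $P_{i+1}$ is an interior point of the geodesic $P_iE_i$; and the direction toward the successor $P_{i+2}$ coincides with the direction $P_{i+1}E_{i+1}$, because $P_{i+2}$ lies on the geodesic from $P_{i+1}$ to $E_{i+1}$. Hence the interior angle of the pursuer's polygon at $P_{i+1}$ is exactly the quadrangle angle $\beta_i$ of Figure~\ref{fig_DTCap}, and its rotation $\pi - \beta_i$ is at most $\alpha_i$ by (\ref{eq:alpha}).

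Next I bound $\alpha_i$ via the CAT(0) angle-sum inequality on the geodesic triangle $\triangle E_iP_{i+1}E_{i+1}$, obtaining $\alpha_i + \delta_i + \varphi_i \le \pi$, where $\delta_i$ and $\varphi_i$ denote its angles at $E_i$ and $E_{i+1}$. The key step is then to collect these remainder angles at each evader vertex. At an interior vertex $E_j$ with $1 \le j \le n-2$, both $\delta_j$ (from the triangle of step $i = j$) and $\varphi_{j-1}$ (from the triangle of step $i = j-1$) are measured at $E_j$ from the same direction, namely $E_jP_j$, because $P_{j+1}$ lies on the segment $P_jE_j$ and so $E_jP_{j+1}$ and $E_jP_j$ coincide as directions. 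The triangle inequality for the angle metric at $E_j$ then yields $\delta_j + \varphi_{j-1} \ge \theta_j$, where $\theta_j = \angle(E_jE_{j-1}, E_jE_{j+1})$ is the interior angle of the evader's polygon at $E_j$.

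Finally, summing $\pi - \beta_i \le \alpha_i \le \pi - \delta_i - \varphi_i$ over $i = 0, \ldots, n-2$ and regrouping the $\delta$'s and $\varphi$'s by the $E$-vertex where they live, the inner pairs contribute $\sum_{j=1}^{n-2}(\delta_j+\varphi_{j-1}) \ge \sum_{j=1}^{n-2}\theta_j$, while the boundary terms $\delta_0$ and $\varphi_{n-2}$ are nonnegative and can simply be dropped. A short rearrangement gives $\tau^P(nD) \le \pi + \sum_{j=1}^{n-2}(\pi - \theta_j) \le \tau^E(nD) + \pi$, since $\pi - \theta_{n-1} \ge 0$. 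I expect the main obstacle is noticing that the directions $E_jP_j$ and $E_jP_{j+1}$ coincide---a direct consequence of the simple-pursuit rule---because it is precisely this coincidence that lets the two CAT(0) triangle-angle remainders at $E_j$ merge, via the angle triangle inequality, into a bound on the evader's own rotation and thereby telescope the estimate.
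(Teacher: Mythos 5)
Your proposal is correct and follows essentially the same route as the paper's proof: bound each pursuer rotation $\pi-\beta_i$ by $\alpha_i$ via (\ref{eq:alpha}), convert to $\pi-\varphi_i-\delta_i$ by the \catzero\ angle-sum inequality for $\triangle E_iP_{i+1}E_{i+1}$, and regroup the evader-side angles at each $E_j$ using the angle triangle inequality (exploiting that $E_jP_j$ and $E_jP_{j+1}$ point in the same direction), with the boundary angles dropped to produce the additive $\pi$. The only differences are cosmetic (your $\delta$/$\varphi$ labels are swapped relative to the paper, and you compare $\tau^P$ and $\tau^E$ at the same time index rather than at offset indices), and your explicit justification of why the pursuer's interior angle at $P_{i+1}$ equals $\beta_i$ is a welcome elaboration of a step the paper leaves implicit.
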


\begin{proof}
Set $(\tau^P)^{n+1}=\tau^P\bigl((n+1)D\bigr)$, the total curvature of
the pursuit curve to $P_{n+1}$, and similarly for $(\tau^E)^{n+1}$.
Label the internal angles of $\triangle E_{i}P_{i+1}E_{i+1}$ by
$\alpha_i, \varphi_i,\delta_i$ as indicated in Figure \ref{fig_DTCap}.
Then

$$(\tau^P)^{n+1}= \sum_0^{n-1} {(\pi - \beta_i)}
\le \sum_0^{n-1}\alpha_i \le \sum_0^{n-1} {(\pi - \varphi_i-\delta_i)},$$
since $\alpha_i+\varphi_i+\delta_i\le\pi$ by the \catzero\ condition.

On the other hand, letting $\theta_i$ be the interior angle of the
evader curve at $E_i$, we have
$$(\tau^E)^n=\sum_1^{n-1}{(\pi-\theta_i)}\ge\sum_1^{n-1} {(\pi - \delta_{i-1}-\varphi_i)},$$
since $\theta_i\le \delta_{i-1}+\varphi_i $ by the triangle inequality
for angle distance between directions at $E_i$. Therefore
$$(\tau^P)^{n+1}-(\tau^E)^n\le \pi -\varphi_0-\delta_{n-1}. $$
\end{proof}

%%%%%%%%%%%%%%%%%%%%%%%%%%%%%%%%%%%%%%%%%%%%%%%%%%%%%%%%%%%%%%%%%%%%%%%%%%%%%%%%%%%%%%%%%%%%%%%%%%%%%%%%%%%%%%%%%%%%%%%%%%%%%%%%%%%%%%%%%%%%%%%%%%%%%%%%%%%%%%%% ________________________________________________________________________
\section{Escape}
\label{sec_Escape}

On a noncompact domain, the relevant question is whether the
evader can escape when the pursuer adopts the simple pursuit-curve
strategy, and, if so, what conditions lead to escape. Here we show
that the pursuer still always wins if the circumradius of the evader
does not grow fast enough, or, equivalently via Theorem
\ref{thm:unbounded}, if the pursuit path is forced to curve too much.
The proof of this necessary condition for escape uses an estimate on
total curvature of pursuit curves that will be proved in Theorem
\ref{thm:TCbound} of the next section.

\begin{theorem}\label{thm:noncompactcap}
Suppose the evader wins a discrete-time simple pursuit on a \catzero\
domain $\Domain$. Then the total curvature $\tau(t)$ of the pursuit
curve from $P_0$ to $P(t)$ is $O(t^{\frac{1}{2}})$.
\end{theorem}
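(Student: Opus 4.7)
The plan is to bound the turning angle of the pursuit curve at each vertex $P_{i+1}$ by its Euclidean comparison angle $\widetilde{\alpha}_i$ from (\ref{eq:alpha}), show that $\sum_i \widetilde{\alpha}_i^2$ is controlled by the finite telescoping quantity $L_0 - L$, and then extract the $O(\sqrt{t})$ bound via Cauchy--Schwarz. The winning hypothesis supplies the essential inputs: since $L_i \searrow L > D$, the sidelengths $L_i - D$ and $L_{i+1}$ are uniformly bounded below by $L - D > 0$, while the nonnegative increments $\mu_i := L_i - L_{i+1}$ telescope to $\sum_i \mu_i = L_0 - L < \infty$.

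The heart of the argument is a quadratic bound $\widetilde{\alpha}_i^2 \le C \mu_i$. I would apply the Euclidean law of cosines in the comparison triangle for $\triangle E_i P_{i+1} E_{i+1}$, whose sides have lengths $d(E_i, E_{i+1}) \le D$, $L_i - D$, and $L_{i+1}$, to obtain
\[
1 - \cos\widetilde{\alpha}_i \;=\; \frac{d(E_i, E_{i+1})^2 - (\mu_i - D)^2}{2(L_i - D) L_{i+1}}.
\]
Expanding the numerator as $(d(E_i, E_{i+1})^2 - D^2) + 2D\mu_i - \mu_i^2$ and using $d(E_i, E_{i+1}) \le D$ bounds it above by $2D\mu_i$, while the denominator is at least $2(L - D) L$. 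Combined with the elementary inequality $x^2 \le \tfrac{\pi^2}{2}(1 - \cos x)$ on $[0, \pi]$, this yields $\widetilde{\alpha}_i^2 \le C \mu_i$ for some $C = C(L, D)$. Summing telescopes to $\sum_{i=0}^{n-1} \widetilde{\alpha}_i^2 \le C(L_0 - L)$, and I take this to be the content of Theorem~\ref{thm:TCbound}.

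With the quadratic bound in hand, Cauchy--Schwarz completes the argument: for $t = nD$,
\[
\tau(t) \;\le\; \sum_{i=0}^{n-1} (\pi - \beta_i) \;\le\; \sum_{i=0}^{n-1} \widetilde{\alpha}_i \;\le\; \sqrt{n}\,\Bigl(\sum_{i=0}^{n-1} \widetilde{\alpha}_i^2\Bigr)^{1/2} \;\le\; \sqrt{C (L_0 - L)\, n} \;\in\; O(\sqrt{t}).
\]
The main obstacle I anticipate is the quadratic estimate $\widetilde{\alpha}_i^2 \le C \mu_i$, which depends crucially on the algebraic cancellation $d(E_i, E_{i+1})^2 - D^2 \le 0$ in the numerator together with the uniform lower bound on the denominator supplied by the winning hypothesis $L > D$. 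Without either ingredient, only the trivial bound $\widetilde{\alpha}_i \le \pi$ would survive, yielding the useless estimate $\tau(t) \in O(t)$.
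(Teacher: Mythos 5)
Your argument is correct, but it takes a genuinely different route from the paper. The paper proves Theorem \ref{thm:noncompactcap} in two lines by reduction: a \catzero\ domain is also \catone, rescaling preserves the \catzero\ property and leaves angles (hence total curvature) invariant, so one rescales to force $L_0<\pi$ and invokes the \catone\ result, Theorem \ref{thm:TCbound} --- whose proof uses the \emph{spherical} law of cosines in model triangles on the unit sphere and produces the constant $B_\infty=\inf_i\{\sin L_{i+1}\sin(L_i-D)\}$. You instead work directly with Euclidean comparison triangles, and your identity
\[
1-\cos\widetilde{\alpha}_i=\frac{d(E_i,E_{i+1})^2-(\mu_i-D)^2}{2(L_i-D)L_{i+1}}
\]
is exact, with the numerator bounded by $2D\mu_i$ and the denominator bounded below by $2(L-D)L$ thanks to the winning hypothesis $L>D$; combined with $x^2\le\tfrac{\pi^2}{2}(1-\cos x)$, telescoping, and Cauchy--Schwarz, this is a complete and self-contained proof. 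The overall architecture (quadratic bound $\widetilde{\alpha}_i^2\le C\mu_i$, telescoping of $\sum\mu_i$ to $L_0-L$, Cauchy--Schwarz) exactly mirrors the paper's proof of Theorem \ref{thm:TCbound}, so your approach amounts to the Euclidean specialization of that argument rather than a citation of it --- note that Theorem \ref{thm:TCbound} as actually stated is the \catone\ version with spherical trigonometry, not the Euclidean estimate you describe. What each route buys: the paper's reduction avoids duplicating the computation and exhibits the \catzero\ theorem as a corollary of the positive-curvature one; your direct route avoids the rescaling step and the spherical machinery entirely and yields a cleaner explicit constant $C=\pi^2D/\bigl(2L(L-D)\bigr)$ in the \catzero\ case. (Minor indexing quibble: for $t=nD$ the interior vertices are $P_1,\dots,P_{n-1}$, so the turning-angle sum should run to $n-2$; this does not affect the asymptotics.)
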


\begin{proof}[Proof, assuming Theorem \ref{thm:TCbound}]
We invoke the facts that a \catzero\ domain is also a \catone\
domain, and a rescaling of a \catzero\ domain is again a \catzero\
domain.

Theorem \ref{thm:TCbound} states that for simple pursuit on a
\catone\ domain, if the evader wins and the initial distance $L_0 =
d(P_0,E_0)$ is less than $\pi$, then the total curvature $\tau(t)$ of
the pursuit curve from $P_0$ to $P(t)$ is $O(t^{\frac{1}{2}})$.
Therefore we rescale the metric of $\Domain$ by $\pi/L_0$. Since
angles are invariant under rescaling, then by Definition \ref{def:tc},
the total curvature of a given segment of the pursuit curve is also
invariant under rescaling.
\end{proof}

\begin{cor}
\label{cor:escapecircumradius}
Let $c$ denote the circumradius function of an evader's path on a \catzero\
domain. Then $c\in\Omega(t^\frac{1}{2})$ is a necessary condition
for the evader to win in a discrete-time simple pursuit game.
\end{cor}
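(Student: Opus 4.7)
The plan is to chain the two preceding results. First I would apply Theorem \ref{thm:noncompactcap}: since the evader wins, the pursuit curve $P(t)$ has total curvature $\tau^P(t)\in O(t^{1/2})$. Setting $a=1/2$ in Theorem \ref{thm:unbounded}(b) then yields $c^P\in\Omega(t^{1/2})$, where $c^P$ is the circumradius of the pursuer's path based at $P_0$. The pursuer moves at unit speed, so arclength and time coincide along $P(t)$ and Theorem \ref{thm:unbounded}(b) applies directly.

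Next I would transfer this lower bound to the evader. From (\ref{eq:monotone}) the separation $L_i=d(P_i,E_i)$ is nonincreasing in $i$, so $L_i\le L_0=d(P_0,E_0)$ throughout the game. Two applications of the triangle inequality give
\[
d(E_0,E_i)\;\ge\;d(P_0,P_i)-d(P_0,E_0)-d(E_i,P_i)\;\ge\;d(P_0,P_i)-2L_0.
\]
In a \catzero\ space, distance from a fixed point is convex along geodesics, so along the evader's piecewise-geodesic curve the maximum distance from $E_0$ on the subpath up to time $nD$ is realized at some vertex $E_j$, $j\le n$. Taking maxima over $i\le n$ in the inequality above yields $c^E(nD)\ge c^P(nD)-2L_0$. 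Since $c^E$ is nondecreasing in time, the additive constant and the passage from times $nD$ to arbitrary $t\ge 0$ are absorbed by $\Omega$-notation, and $c^E\in\Omega(t^{1/2})$ follows.

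The substantive content of the corollary lies in Theorem \ref{thm:noncompactcap} (and ultimately in Theorem \ref{thm:TCbound}), so I would not expect any real obstacle here; the argument is essentially packaging. The one mild point to keep straight is that the pursuer's natural parameter is arclength while the evader's is time at speed $\le 1$, so I would be careful to point out that matching at the times $nD$, together with monotonicity of $c^E$, is enough to propagate the $\Omega(t^{1/2})$ bound from pursuer to evader.
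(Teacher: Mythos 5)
Your proposal is correct and follows essentially the same route as the paper, which simply combines Theorem \ref{thm:noncompactcap} with Theorem \ref{thm:unbounded}(b) at $a=\tfrac{1}{2}$. The only difference is that you explicitly carry the $\Omega(t^{1/2})$ bound from the pursuer's circumradius over to the evader's via the triangle inequality and the monotonicity of $L_i$ --- a step the paper leaves implicit --- and that added care is harmless and arguably an improvement.
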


\begin{proof}
Combine Theorem \ref{thm:noncompactcap} with Part (b) of Theorem
\ref{thm:unbounded}, using $a=\frac{1}{2}$.
\end{proof}
%%%%%%%%%%%%%%%%%%%%%%%%%%%%%%%%%%%%%%%%%%%%%%%%%%%%%%%%%%%%%%%%%%%%%%%%%%%%%%%%%%%%%%%%%%%%%%%%%%%%%%%%%%%%%%%%%%%%%\section

\section{Domains with positive curvature bounds}
\label{sec_catk}

In applications, spaces with positive curvature are not merely possible
but prevalent. In this section, we demonstrate that controlled
amounts of positive curvature are admissible, as long as we control
initial distances between the pursuer and evader.

First we provide a large class of nonconvex examples of \catk\
domains in $\R^n$:

\begin{theorem}\label{thm:catone-example}
A closed domain $\Domain$ in $\R^n$ with smooth boundary
$\partial\Domain$, where $\Domain$ carries its intrinsic metric, is a
\catk\  space for $K>0$ if it is supported at every
$p\in\partial\Domain$ by a sphere of radius $1/\sqrt{K}$, that is,
every point at distance $\le 1/\sqrt{K}$ from $\Domain$ is the center
of a closed ball that meets $\Domain$ in a single point.
\end{theorem}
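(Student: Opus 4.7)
My approach is to reduce to the characterization of upper curvature bounds for Riemannian manifolds with boundary from \cite{ABB93}, already invoked in the excerpt for the \catzero\ case. In the present setting, that characterization says $\Domain$ with its intrinsic metric has curvature locally bounded above by $K$ provided (i) the sectional curvature of the interior is $\le K$, and (ii) at every boundary point $p$ the second fundamental form of $\partial\Domain$ with respect to the outward unit normal is $\le \sqrt{K}\,I$. Condition (i) is automatic because $\Domain \subset \R^n$ is flat, so the geometric content of the theorem lies entirely in deducing (ii) from the supporting-sphere hypothesis and then passing from a local to a global \catk\ statement.

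To verify (ii), I would fix $p\in\partial\Domain$ and take $S$ to be a sphere of radius $r=1/\sqrt{K}$ that supports $\Domain$ at $p$ (i.e., the closed ball bounded by $S$ meets $\Domain$ only at $p$). With respect to the outward unit normal $\nu$ of $\partial\Domain$ at $p$, the second fundamental form of $S$ is exactly $\sqrt{K}\,I$. Since $\partial\Domain$ is tangent to $S$ at $p$ and lies on the $\Domain$-side of $S$ nearby, a standard Taylor expansion along normal sections shows that the second fundamental form of $\partial\Domain$ at $p$ is pointwise dominated by that of $S$. This is (ii).

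To upgrade the local \catk\ bound to a global one I would invoke the Alexandrov local-to-global theorem: a complete length space of curvature locally $\le K$ in which every pair of points at distance $<\pi/\sqrt{K}$ is joined by a geodesic is \catk. The uniform reach-type assumption, which supplies the \emph{same} supporting radius $1/\sqrt{K}$ at every boundary point, gives uniform control over convexity neighborhoods and hence the existence of such short geodesics in $\Domain$.

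The main obstacle, as usual with boundary geometry, is the careful treatment of geodesics that graze or run along $\partial\Domain$. Intrinsic geodesics in $\Domain$ alternate between Euclidean segments and smooth arcs lying on $\partial\Domain$, and the triangle-comparison inequality must be verified for triangles whose sides include such boundary arcs. This is precisely where the quantitative supporting-sphere condition does its real work: it says that a boundary-hugging geodesic, when developed into the model plane $M_K^2$, bends no more than a small circle on the sphere of curvature $K$, which is exactly what is needed to close the comparison argument against the model triangle.
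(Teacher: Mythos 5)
Your local analysis is essentially right and matches what the paper relies on implicitly: a supporting sphere of radius $1/\sqrt{K}$ at $p$ dominates the second fundamental form of $\partial\Domain$ with respect to the outward normal, and by the characterization of \cite{ABB93} (cited in the paper's introduction for the \catzero\ case) this yields curvature \emph{locally} bounded above by $K$, since the interior is flat. That part of your plan is fine.

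The gap is in the local-to-global step, and it is not a technicality --- it is where the whole content of the theorem sits. The Alexandrov patchwork construction (\cite[p.\,199]{BH99}) does \emph{not} follow from local curvature $\le K$ plus the mere \emph{existence} of geodesics between points at distance $<\pi/\sqrt{K}$, which is how you state it; it requires that such geodesics be unique, or at least vary continuously with their endpoints. A circle of circumference less than $2\pi/\sqrt{K}$ is complete, locally \catk, and geodesic, yet is not \catk: the geodesic between near-antipodal points jumps discontinuously, and the patchwork fails. Existence of geodesics in $\Domain$ is automatic (Hopf--Rinow for complete, locally compact length spaces), so your appeal to ``uniform control over convexity neighborhoods and hence the existence of such short geodesics'' buys nothing; what must be proved is that geodesics of length $<\pi/\sqrt{K}$ --- including those that alternate between straight segments and arcs lying on $\partial\Domain$ --- are uniquely determined by their endpoints. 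That is exactly what the paper imports from \cite[Theorem 3]{ABB87} on the Riemannian obstacle problem, where the uniform supporting-ball hypothesis is the operative input. Your closing paragraph acknowledges that boundary-hugging geodesics are ``the main obstacle'' but then only gestures at a direct triangle-comparison for triangles with boundary arcs as sides, which is neither carried out nor needed once uniqueness is in hand. As written, the argument does not close.
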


\begin{proof}
By \cite[Theorem 3]{ABB87}, the hypothesis of supporting balls
implies that geodesics of $\Domain$ of length $<\pi/\sqrt{K}$ are
uniquely (and hence continuously) determined by their endpoints. By
the Alexandrov patchwork construction (see \cite[p.199]{BH99}), it
follows that $\Domain$ is a  \catk\  space.
\end{proof}

We now study the asymptotic behavior of total curvature of pursuit
curves in \catk\ spaces for $K>0$.  Rescaling the metric by the factor
$1/\sqrt{K}$, we may assume $K=1$.

Just as in the \catzero\ case, the triangle inequality  for $\triangle
P_{i+1}E_{i+1}P_i$ in a \catone\ space easily implies that the
distances $L_i = d(P_i,E_i)$ are monotonically non-increasing: see
Figure \ref{fig_DTCap}. The condition for equality from one step to
the next is that $P_{i+1}$ and $E_{i}$ are on a geodesic segment
$P_iE_{i+1}$, and hence the angles $\angle P_iP_{i+1}P_{i+2}$ and
$\angle P_{i+1}E_iE_{i+1}$ are both $\pi$.

\begin{theorem}
\label{thm:TCbound} On a \catone\ domain, suppose the evader wins
a discrete-time simple pursuit with initial distance $L_0 =
d(P_0,E_0)<\pi$. Then the total curvature $\tau(t)$ of the pursuit
curve from $P(0)$ to $P(t)$ is $O(t^{\frac{1}{2}})$.
\end{theorem}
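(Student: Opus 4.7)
The plan is to adapt the \catzero\ reasoning of Section~\ref{sec_Pursuit} to the \catone\ setting, replacing Euclidean by spherical comparison. The hypothesis $L_0 < \pi$ plays a dual role: combined with the monotonicity $L_{i+1} \le L_i$, it keeps the perimeter of every triangle $\triangle E_i P_{i+1} E_{i+1}$ below $2\pi$, so \catone\ angle comparison applies; and it confines the $L_i$ to the compact interval $[L, L_0] \subset (D,\pi)$, where $L = \lim L_i > D$ because the evader wins.

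The core reduction is to produce a constant $C = C(L, L_0, D)$ such that
\begin{equation*}
(\pi - \beta_i)^2 \le C(L_i - L_{i+1})
\end{equation*}
holds for every $i$. Once this is in hand, Cauchy--Schwarz applied to the telescoping sum $\sum_i (L_i - L_{i+1}) \le L_0$ gives
\begin{equation*}
\tau(nD) = \sum_{i=0}^{n-1}(\pi - \beta_i) \le \sqrt{n}\left(\sum_{i=0}^{n-1}(\pi - \beta_i)^2\right)^{1/2} \le \sqrt{nCL_0},
\end{equation*}
and since $t = nD$, this yields $\tau(t) = O(t^{1/2})$.

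To establish the key inequality, I apply \catone\ angle comparison to $\triangle E_i P_{i+1} E_{i+1}$, whose sides have lengths $a = L_i - D$, $b = L_{i+1}$, and $c = d(E_i, E_{i+1}) \le D$. Writing $\widetilde{\alpha}_i^S$ for the spherical comparison angle at $P_{i+1}$, the angle comparison together with~(\ref{eq:alpha}) gives $\pi - \beta_i \le \alpha_i \le \widetilde{\alpha}_i^S$. The spherical law of cosines, combined with $\cos c \ge \cos D$, produces
\begin{equation*}
\cos\widetilde{\alpha}_i^S \ge \frac{\cos D - \cos a \cos b}{\sin a \sin b}.
\end{equation*}
The cosine-difference identity $\cos D = \cos a \cos(a+D) + \sin a \sin(a+D)$ shows the right-hand side equals $1$ precisely when $b = a + D$, i.e.\ $L_{i+1} = L_i$. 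Taylor expanding in $\epsilon = L_i - L_{i+1}$ yields
\begin{equation*}
1 - \cos\widetilde{\alpha}_i^S \le \frac{\sin D}{\sin(L_i - D)\sin L_i}\,\epsilon + O(\epsilon^2),
\end{equation*}
so $(\widetilde{\alpha}_i^S)^2 \le C\epsilon$ with Taylor constant $2\sin D/(\sin(L_i - D)\sin L_i)$, uniformly bounded as $L_i$ ranges over the compact interval $[L, L_0]$.

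The main obstacle is promoting this asymptotic estimate to a uniform inequality $(\widetilde{\alpha}_i^S)^2 \le C(L_i - L_{i+1})$ valid across the full range of $\epsilon$ that occurs in the pursuit, not only in the limit $\epsilon \to 0$. A compactness argument handles this: on the closed region $\{(x,y) : L \le y \le x \le L_0\}$, the function $(x, y) \mapsto (\widetilde{\alpha}_i^S)^2/(x - y)$, extended by its Taylor limit on the diagonal $y = x$, is continuous and hence bounded. Continuity at a diagonal point rests on $\sin(x-D)$ and $\sin x$ being bounded away from zero, which is precisely what the hypotheses $L > D$ and $L_0 < \pi$ guarantee.
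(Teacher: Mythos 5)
Your proposal is correct and follows essentially the same route as the paper: spherical law of cosines on the comparison triangle, the key estimate $\tilde\alpha_i^2\le C\,\Delta_i$ with $\Delta_i=L_i-L_{i+1}$, then Cauchy--Schwarz against the telescoping sum $\sum\Delta_i\le L_0-L_\infty$. The only difference is cosmetic: where you promote the Taylor estimate to a uniform bound by a compactness/continuity argument, the paper gets an explicit constant directly from two elementary inequalities, namely $\cos(D-\Delta_i)\le\cos D+\Delta_i\sin D$ (mean value theorem) and $1-\cos\theta\ge\theta^2/5$ on $[0,\pi]$ (concavity of $\sin(\theta/2)$).
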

\begin{proof}
Since the distances $L_i=d(P_i,E_i)$ are monotonically
nonincreasing, all $L_i < \pi$ and the triangles $\triangle
P_{i+1}E_iE_{i+1}$ have perimeters $<2\pi$. Thus, they have model
triangles in the unit sphere.  Let the angle corresponding to $\alpha_i$
be $\tilde\alpha_i$, so that $\alpha_i \le
\tilde\alpha_i$ by the \catone\ condition.  Hence
$$\tau(t) \le \sum_{i=0}^{n-1}\alpha_i\le \sum_{i=0}^{n-1}\tilde\alpha_i.$$
Since the evader wins, we have $L_\infty >D$.  We set
$\Delta_i = L_i - L_{i+1}$.

Apply the spherical law of cosines to the model triangles:
\begin{align}
\cos D \le \cos d(E_i,E_{i+1})
&= \cos L_{i+1} \cos(L_i - D)
+ \sin L_{i+1} \sin(L_i - D) \cos\tilde\alpha_i \notag\\
&= \cos L_{i+1} \cos(L_i - D) +\sin L_{i+1} \sin(L_i - D)\notag \\
&\quad - \sin L_{i+1} \sin(L_i - D)(1 - \cos\tilde\alpha_i)\notag \\
&= \cos(D - \Delta_i)
- \sin L_{i+1} \sin(L_i - D) (1 - \cos\tilde\alpha_i)\notag \\
&\le \cos D + \Delta_i \sin D - B_\infty \tilde\alpha_i^2/5\label{eq:catone},
\end{align}
where $B_\infty = \inf_i\{\sin L_{i+1} \sin(L_i - D)\}$. The last
inequality depends on two elementary inequalities. To verify that
$\cos(D-\Delta_i) \le \cos D + \Delta_i\sin D$, apply the Mean Value
Theorem for $\cos x$ on the interval $D - \Delta_i \le x \le D$. To
verify that $1 - \cos \alpha_i \ge  \alpha_i^2/5$, use the identity $1 -
\cos x = 2\sin^2(x/2)$. Then since $\sin(x/2)$ is concave on the
interval $0 \le x \le \pi$, its graph is above the chord: $\sin(x/2) \ge
x/\pi$. Since $\pi^2 < 10$, we have $2/\pi^2 > 1/5$.

Inequality (\ref{eq:catone}) yields an inequality for $\alpha_i^2$:
$$\alpha_i^2 \le 5\Delta_i\sin D/B_\infty.$$
The following calculation completes the estimate for total
curvature. It uses the Cauchy inequality
$$\tau(t) = \tau(nD) = \sum_{i = 0}^{n-2} \alpha_i
\le \sqrt{(n-1)\sum_{i = 0}^{n-2}\tilde\alpha_i^2},$$
and the telescoping sum
$$\sum_{i = 0}^{n-2}\Delta_i = L_0 - L_{n-1} \le L_0 - L_\infty.$$
Hence
$$\tau(t)
\le \sqrt{(n-1) 5\sin D(L_0 - L_\infty)/B_\infty}
\le C\sqrt{nD} = C\sqrt t,$$
where
$$C =  \sqrt{5(L_0 - L_\infty)/B_\infty}.$$
\end{proof}

\begin{ex} \label{ex:antipodal}
The hypothesis $L_0=d(P_0,E_0)<\pi$ in Theorem \ref{thm:TCbound}
is necessary.  For example, let $\Domain$ be the complement in
$\R^n$ of one or more disjoint open balls of radius $>1$.  Then
$\Domain$ is a \catone\ domain by Theorem
\ref{thm:catone-example}.  Start with $P$ and $E$ antipodal on the
boundary of one of the balls.  $P$ moves toward $E$ a distance
$D<\pi$ around the boundary, while $E$ moves to the antipodal point;
from then on, $P$ and $E$ can oscillate between the two antipodal
pairs.  The total curvature of $P$ increases by $\pi$ at each step, so
grows linearly rather than $O(t^{\frac{1}{2}})$.
\end{ex}

\begin{ex}\label{ex:periodic}
In \catk\ domains for $K>0$, there is no circumradius estimate
analogous to that of Corollary \ref{cor:escapecircumradius}.  As a
simple example of escape with bounded circumradius, consider the
domain $\Domain$ of Remark \ref{ex:antipodal}.  Let $P$ and $E$
travel at constant distance $<\pi$ apart around a  local geodesic of
$\Domain$ which is a great circle in one of the ball boundaries.
\end{ex}

Bounded escape in the \catone\ setting always exhibits some aspects
of Example \ref{ex:periodic}:

\begin{prop}\label{prop:infinite-geodesic}
Let $\Domain$ be a compact \catone\ space.   Suppose the evader
wins a discrete-time simple pursuit with initial distance
$L_0=d(P_0,E_0)<\pi$.  Then there is a bilaterally infinite local
geodesic in $\Domain$, any finite segment of which is the limit of
segments of the pursuit curve.
 \end{prop}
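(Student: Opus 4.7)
The plan is to extract, via Arzel\`a--Ascoli and a diagonal argument, a subsequential limit of longer and longer windows of the pursuit curve centered at later and later vertices, and then verify that this limit is a local geodesic. The preliminary observation is that the interior angle $\beta_i$ of the pursuit curve at $P_{i+1}$ tends to $\pi$ as $i\to\infty$. Indeed, by Theorem \ref{thm:TCbound}, $\tau(t)=O(t^{1/2})$, so for any fixed $T>0$ the increment $\tau(t+T)-\tau(t)$ tends to $0$ as $t\to\infty$. Since each $\pi-\beta_i\ge 0$ and every window of length $T$ contains only $T/D$ vertices, each $\pi-\beta_i\to 0$.

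For each $N>0$ and each integer $k>N/D$, define the $1$-Lipschitz curve $\gamma_k^N:[-N,N]\to\Domain$ by $\gamma_k^N(s)=P(kD+s)$; it is geodesic on each subinterval $[mD,(m+1)D]$ with potential breakpoints at $s=mD$. Since $\Domain$ is compact, Arzel\`a--Ascoli produces, for each $N$, a subsequence along which $\gamma_k^N$ converges uniformly to some $\gamma^N:[-N,N]\to\Domain$. A standard nested-subsequence diagonal extraction then yields a single subsequence $(k_j)$ and a $1$-Lipschitz curve $\gamma:\R\to\Domain$ with $\gamma_{k_j}^N\to\gamma|_{[-N,N]}$ uniformly for every $N$. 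Any finite segment of $\gamma$ is then by construction a uniform limit of segments of the pursuit curve, so it remains to show $\gamma$ is a local geodesic.

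On each interval $[mD,(m+1)D]$, $\gamma$ is the uniform limit of geodesic segments of length $D$ whose endpoints remain at distance exactly $D$; since $D<L_0<\pi$ and geodesics of length $<\pi$ in a \catone\ space depend continuously on their endpoints, $\gamma|_{[mD,(m+1)D]}$ is itself a geodesic of length $D$. At a breakpoint $s=mD$, fix $\epsilon<\min(D,\pi/2)$ and apply the \catone\ comparison to the isosceles triangle $\triangle\,\gamma_{k_j}^N(mD-\epsilon)\,\gamma_{k_j}^N(mD)\,\gamma_{k_j}^N(mD+\epsilon)$: its equal sides have length $\epsilon$, and its apex angle is $\beta_{k_j+m-1}\to\pi$. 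By the \catone\ comparison the corresponding spherical model apex angle is at least as large, hence also tends to $\pi$, so the spherical law of cosines forces the common base length to tend to $2\epsilon$. Continuity of the distance function under uniform convergence then gives $d(\gamma(mD-\epsilon),\gamma(mD+\epsilon))=2\epsilon$, so $\gamma|_{[mD-\epsilon,mD+\epsilon]}$ is a geodesic, establishing local geodesicity at the breakpoint.

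The main technical point is propagating the condition that angles tend to $\pi$ across the uniform limit, since angles between geodesics in \catk\ spaces are only semicontinuous under uniform convergence of the geodesics. The isosceles-base argument above sidesteps this issue by converting angle convergence into base-length convergence via the spherical law of cosines, relying only on the \catone\ triangle comparison already exploited in the proof of Theorem \ref{thm:TCbound}.
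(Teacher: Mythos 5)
Your argument has a genuine gap at its very first step. From $\tau(t)\in O(t^{1/2})$ alone it does \emph{not} follow that $\tau(t+T)-\tau(t)\to 0$ as $t\to\infty$: a nondecreasing function can satisfy $\tau(t)\le C\sqrt{t}$ while having increments bounded away from zero infinitely often (take $\tau$ to jump by $1$ at each time $n^2$). Sublinear growth only yields $\liminf_{t\to\infty}\bigl(\tau(t+T)-\tau(t)\bigr)=0$, i.e., the existence of a sequence of window centers along which the local total curvature vanishes --- and that is exactly how the paper proceeds: Lemma \ref{lem:periodic}, stated for an arbitrary curve of sublinear total curvature, selects $t_i\to\infty$ with $\tau(t_i+i)-\tau(t_i-i)\to 0$, centers the windows there, and concludes via lower semicontinuity of total curvature and the length estimate of Theorem \ref{thm:tc-limit}. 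Your extraction instead takes windows centered at \emph{arbitrary} vertices $k_jD$, chosen only so that the curves converge in the compact space, so nothing prevents the selected windows from containing vertices whose angle defect stays bounded away from $0$; the breakpoint argument then fails.

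The gap is reparable, because for a pursuit curve the claim $\pi-\beta_i\to 0$ is in fact true --- but it must be derived from the internals of the proof of Theorem \ref{thm:TCbound}, not from its $O(t^{1/2})$ conclusion. There one has $\tilde\alpha_i^2\le 5\Delta_i\sin D/B_\infty$ with $\sum_i\Delta_i\le L_0-L_\infty<\infty$, hence $\Delta_i\to 0$, hence $\tilde\alpha_i\to 0$, and $\pi-\beta_i\le\alpha_i\le\tilde\alpha_i$ by the triangle inequality for angle distance. With that substitution the remainder of your proof is sound and follows a genuinely different route from the paper's: the isosceles-triangle comparison at the breakpoints replaces the appeal to semicontinuity of total curvature, and the fact that consecutive pursuit points are at distance exactly $D$ replaces the length estimate needed to rule out degeneration of the limit. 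The price is generality: the paper's Lemma \ref{lem:periodic} applies to any curve of sublinear total curvature, whereas the repaired version of your argument is specific to pursuit curves.
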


The proof is immediate from Theorem \ref{thm:TCbound} and the
following lemma:

\begin{lemma}\label{lem:periodic}
Let $\Domain$ be a compact \catone\ space. Suppose the total
curvature function $\tau(t)$ of a curve
$\gamma:[0,\infty)\to\Domain$ has sublinear growth.   Then there is
a bilaterally infinite local geodesic in $\Domain$, any finite segment
of which is the limit of segments of $\gamma$.
\end{lemma}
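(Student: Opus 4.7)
The plan is to use the sublinear growth of $\tau$ to produce longer and longer subarcs of $\gamma$ of vanishing total curvature, then extract a locally uniform limit by compactness, and argue that such a limit curve must be a local geodesic.

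First, an averaging argument supplies the subarcs. Because $\tau(t)/t\to 0$, for any fixed $L>0$ and $\eps>0$, partitioning $[0,NL]$ into $N$ subintervals of length $L$ makes the average $\tau$-increment equal to $L\tau(NL)/(NL)<\eps$ for $N$ large, so some subinterval $[a,a+L]$ has $\tau(a+L)-\tau(a)<\eps$, and hence $\gamma|_{[a,a+L]}$ has total curvature $<\eps$. Taking $L_n\to\infty$ and $\eps_n\to 0$, I obtain intervals $[a_n,a_n+L_n]$ on which $\gamma$ has total curvature $\le\eps_n$. Reparametrize by $\gamma_n(s):=\gamma(a_n+L_n/2+s)$ for $s\in[-L_n/2,L_n/2]$; each $\gamma_n$ is $1$-Lipschitz. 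Compactness of $\Domain$ gives a convergent subsequence of $\{\gamma_n(0)\}$, and Arzela--Ascoli applied to each compact interval $[-k,k]$ together with a diagonal extraction produces a subsequence converging locally uniformly to a $1$-Lipschitz curve $\gamma_\infty\colon\R\to\Domain$. By construction, every finite segment of $\gamma_\infty$ is the uniform limit of segments of $\gamma$.

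It remains to show $\gamma_\infty$ is a local geodesic. Fix $s<t$ with $t-s<\pi$, so each closed curve formed by $\gamma_n|_{[s,t]}$ and its geodesic chord has perimeter $<2\pi$ and admits Reshetnyak majorization in the unit sphere $S^2$. For large $n$, the arc $\gamma_n|_{[s,t]}$ has total curvature at most $\eps_n$, and the majorizing spherical arc $\widetilde{\gamma}_n$ inherits arclength $t-s$ and total curvature $\le\eps_n$. A standard chord-arc comparison in $S^2$ (as the total curvature of a convex spherical arc of fixed arclength $<\pi$ tends to $0$, its chord length tends to its arclength, proved for instance by noting that the tangent stays within $\eps_n/2$ of the chord direction) yields $d(\gamma_n(s),\gamma_n(t))\to t-s$ as $n\to\infty$. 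Combined with the $1$-Lipschitz bound this forces $d(\gamma_\infty(s),\gamma_\infty(t))=t-s$, so $\gamma_\infty|_{[s,t]}$ is a geodesic. Hence $\gamma_\infty$ is a local geodesic, and the extraction produces the required bilaterally infinite local geodesic.

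The main obstacle is the spherical chord-arc estimate: uniformly in $t-s$ (below some threshold determined by the \catone\ convexity radius), one must control how the chord of the majorizing arc approaches its arclength as total curvature tends to $0$. This is a routine consequence of comparison with circular arcs in $S^2$, but one must verify that the perimeter condition for Reshetnyak majorization remains in force along the entire sequence and that the chord-arc bound is uniform in the relevant scale.
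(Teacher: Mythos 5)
Your proof is correct, and its skeleton is the same as the paper's: both arguments use sublinearity of $\tau$ to extract (by pigeonhole/averaging) arbitrarily long subarcs of $\gamma$ whose total curvature tends to $0$, pass to a limit curve by compactness (the paper via nested intervals centered at fixed points $t_i$ and successive extensions $\rho_1\subset\rho_2\subset\cdots$, you via Arzel\`a--Ascoli and one diagonal subsequence --- an equivalent packaging), and then show the limit is a local geodesic. The one real divergence is in that last step. The paper simply cites Maneesawarng et al.\ (Theorem \ref{thm:tc-limit}): lower semicontinuity (\ref{tc-semicont}) to get that the limit has zero total curvature, and the length estimate (\ref{tc-length}) to see that lengths do not collapse. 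You bypass semicontinuity and argue metrically, via Reshetnyak majorization into the unit sphere and a spherical chord--arc estimate, that $d(\gamma_n(s),\gamma_n(t))\to t-s$, which with the $1$-Lipschitz bound forces the limit to be a unit-speed geodesic on every subinterval of length $<\pi$. What this buys is a more self-contained and arguably cleaner argument; what it costs is that you are essentially re-deriving the cited length estimate --- indeed you could apply Theorem \ref{thm:tc-limit} (\ref{tc-length}) directly to $\gamma_n|_{[s,t]}$ inside $\Domain$ and skip majorization entirely. Two details in your sketch need more care than the parentheticals suggest, though neither is a genuine gap: (i) the fact that the majorizing arc has total curvature at most $\eps_n$ is immediate for polygonal curves (majorization does not increase angles) but for a general curve requires an inscription/approximation step; and (ii) the phrase ``the tangent stays within $\eps_n/2$ of the chord direction'' does not literally make sense on the sphere, where the chord has no single direction along the arc --- the correct formulation is via the First Variation Formula (\ref{first-variation}), bounding the angle between the forward tangent at $\widetilde{\gamma}_n(u)$ and the geodesic from $\widetilde{\gamma}_n(u)$ to the far endpoint by the remaining total curvature, or else by direct comparison with the once-broken isosceles geodesic as in (\ref{tc-length}).
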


We draw on the following work of Maneesawarng et al:

\begin{theorem}\label{thm:tc-limit}
In a \catone\ space, total curvature has the following properties.
\begin{enumerate}

\item\label{tc-semicont} {\em Semi-continuity \cite{KM07}:}  if a
    sequence of polygonal curves  with total curvatures $\tau_m$
    converges uniformly on the same parameter interval to a curve
    $\gamma$, then $\tau_\gamma \le \liminf\tau_m.$

\item\label{tc-continuity} {\em Continuity under inscription
    \cite{M99,ML03}:}  if a sequence of  polygonal curves  with total
    curvatures $\tau_m$ is inscribed in a curve $\gamma$ so the
    maximum diameters $d_m$ of the broken segments of
    $\gamma$ approach $0$, then $\tau_\gamma = \lim\tau_m.$

\item\label{tc-length} {\em Length estimate \cite{M99,ML03}:} Let
    $\gamma$ be a curve from $p$ to $q$, with $\tau_\gamma +
    d(p,q)< \pi$.  Then $|\gamma|$ is at most the length of an
    isosceles once-broken geodesic in the unit sphere having the
    same total curvature and endpoint separation.
\end{enumerate}
\end{theorem}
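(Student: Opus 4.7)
The plan is to prove the three parts in order, with parts (\ref{tc-semicont}) and (\ref{tc-continuity}) sharing an angle-continuity argument together with a local monotonicity lemma for inscribed sub-polygons in a \catone\ space, and part (\ref{tc-length}) reducing to a pure spherical extremal problem via Reshetnyak majorization.

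For part (\ref{tc-semicont}), given polygonal curves $\sigma_m$ converging uniformly to $\gamma$ on a common parameter interval, I would fix any polygonal curve $\sigma$ inscribed in $\gamma$ with vertices at parameters $t_0 < \cdots < t_k$, and form the auxiliary polygonal $\sigma'_m$ through $\sigma_m(t_0), \ldots, \sigma_m(t_k)$. Uniform convergence forces the sidelengths of $\sigma'_m$ to converge to those of $\sigma$; once $m$ is large these sidelengths are short enough that the \catone\ comparison angles at the vertices converge to those of $\sigma$, giving $\tau_{\sigma'_m} \to \tau_\sigma$. I would then invoke a local monotonicity lemma: after inserting the points $\sigma_m(t_j)$ as extra vertices of $\sigma_m$ (which adds zero to its total curvature, since each new vertex lies in the interior of an edge), the sub-polygon $\sigma'_m$ has total curvature at most that of the refined $\sigma_m$, provided all the sub-segments are short enough. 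Combining yields $\tau_\sigma \le \liminf \tau_m$, and taking the supremum over inscribed $\sigma$ gives (\ref{tc-semicont}). For part (\ref{tc-continuity}), inscribed polygonals $\rho_m$ in $\gamma$ with maximum segment diameter $d_m \to 0$ converge uniformly to $\gamma$ by the triangle inequality, so (\ref{tc-semicont}) yields $\tau_\gamma \le \liminf \tau_{\rho_m}$. The reverse inequality $\limsup \tau_{\rho_m} \le \tau_\gamma$ follows from the same local monotonicity applied to the common refinement of $\rho_m$ with a polygonal that realizes $\tau_\gamma$ to within $\epsilon$.

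For part (\ref{tc-length}), approximate $\gamma$ by inscribed polygonal curves $\rho_m$ using (\ref{tc-continuity}), so that $\tau_{\rho_m} \to \tau_\gamma$ and $|\rho_m| \to |\gamma|$ with the same endpoints $p,q$. Close each $\rho_m$ by a shortest geodesic from $q$ to $p$; the hypothesis $\tau_\gamma + d(p,q) < \pi$, together with a preliminary length estimate obtained piecewise from a direct two-segment spherical comparison, ensures that the resulting closed curve eventually has perimeter less than $2\pi$. Reshetnyak majorization then produces a convex region $\widetilde{R}_m$ in the unit sphere, bounded by a closed curve of the same length, in which $\rho_m$ corresponds to a convex spherical polygonal arc $\widetilde{\rho}_m$ having the same total curvature and endpoint distance. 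It remains to show that among convex polygonal arcs in the unit sphere with fixed total curvature $\tau$ and fixed endpoint distance $d$ satisfying $\tau + d < \pi$, the length is maximized by the once-broken isosceles geodesic. I would prove this by an iterative symmetrization that replaces any two adjacent segments by an isosceles pair with the same cumulative exterior angle and the same outer endpoints, and verifies by spherical trigonometry that this replacement does not decrease length; iterating reduces any polygonal to the two-segment isosceles configuration.

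The main obstacle is the \catone\ sub-polygon monotonicity lemma that supports (\ref{tc-semicont}) and (\ref{tc-continuity}). In \catzero\ this monotonicity is global and follows immediately from triangle comparison, but in \catone\ it holds only when the comparison triangles lie in a hemisphere, forcing one to control the mesh of the polygonals involved. Making this control uniform in $m$, while $\sigma_m$ itself is varying, is the most delicate step, and will likely require a compactness argument on $\gamma$ together with a uniform upper bound on the diameters of the auxiliary triangles entering each angle comparison.
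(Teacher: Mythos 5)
This theorem is not proved in the paper at all: it is quoted as background, with each part attributed to the cited references \cite{KM07}, \cite{M99}, \cite{ML03}. So there is no ``paper proof'' to compare against; what follows is an assessment of your sketch on its own terms.

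The central gap is the ``local monotonicity lemma'' on which your parts (\ref{tc-semicont}) and (\ref{tc-continuity}) rest, and it is not merely delicate --- it is false as stated. Exact monotonicity of total rotation under inscription fails in \catone\ spaces even when all segments are arbitrarily short. Take the unit sphere itself: let $\gamma = p\to a\to b\to c\to q$ with $p$ on the geodesic extension of $ba$ beyond $a$ and $q$ on the extension of $bc$ beyond $c$, and let $\sigma = p\to a\to c\to q$ be obtained by dropping the vertex $b$. Then $\tau_\gamma = \pi - \angle abc$ while $\tau_\sigma = \angle bac + \angle bca = (\pi - \angle abc) + \mathrm{Area}(\triangle abc) > \tau_\gamma$, by the spherical excess formula; this holds for triangles of any size. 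The correct statement, which is what \cite{M99,ML03} actually prove and use, is monotonicity \emph{up to an additive error} bounded by the sum of the excesses (areas) of the spherical comparison triangles involved, and the substance of those papers is showing that this error vanishes in the limit as the mesh goes to zero. For your part (\ref{tc-semicont}) this is a further problem: the sub-polygon $\sigma'_m$ you extract from $\sigma_m$ is obtained by deleting \emph{many} vertices of $\sigma_m$ whose spacing you do not control (uniform convergence controls positions, not the mesh or the number of vertices of $\sigma_m$), so the accumulated error terms are not obviously small; this is precisely why semicontinuity required a separate paper \cite{KM07}. A secondary gap sits in part (\ref{tc-length}): to invoke Reshetnyak majorization you need the closed-up curve to have perimeter $< 2\pi$, i.e., a length bound on $\rho_m$ --- which is essentially the conclusion you are trying to prove. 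The standard repair (subdividing $\gamma$ into pieces of small total curvature and bootstrapping) is what your phrase ``a preliminary length estimate obtained piecewise'' gestures at, but it needs to be carried out; note also that the majorizing arc has total curvature at most, not equal to, that of $\rho_m$, which is harmless but should be stated correctly. The spherical symmetrization for the extremal problem is a reasonable plan, but given the failures above the proposal as written does not establish the theorem.
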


 \begin{proof}[Proof of Lemma \ref{lem:periodic}]
We claim that for any $\epsilon>0$ and $T>0$, there is a sequence
$t_{n}\to\infty$ such that the total curvature $\tau(t)$ of $\gamma$
satisfies $\tau(t_n+T)-\tau(t_n-T)<\epsilon$.  Otherwise, any
increasing sequence of such $t_n$ would have a finite supremum,
after which the growth of $\tau$ would be linear, contradicting the
hypothesis. Choosing sequences $\epsilon_i\to0$ and $T_i=2i$,  and
selecting one $t_i$ for each $i$, yields a sequence $t_i$ satisfying
\begin{equation}\label{vanishing-tc}
\tau(t_i+i)-\tau(t_i-i)\to 0,\ t_i\to\infty.
\end{equation}

Let $\gamma_{ik}$ be the the restriction of $\gamma$ to
$[t_{i}-k,t_{i}+k]$, $i\ge k$, reparametrized by arclength on $[-k,k]$.
Writing $\tau_{ik}=\tau_{\gamma_{ik}}$, we have
$\lim_{i\to\infty}\tau_{ik}=0$ by  (\ref{vanishing-tc}).

By compactness of $\Domain$, the $\gamma_{i1}$ have  a
subsequence $\tilde{\gamma}_{i1}$ that converges to a curve
$\rho_1$. By Theorem \ref{thm:tc-limit} (\ref{tc-semicont}),
$\tau(\rho_1)=0$ and so $\rho_1$ is a local geodesic.  Theorem
\ref{thm:tc-limit} (\ref{tc-length}) implies $|\gamma_{i1}|\to
|\rho_1|$, and so $\rho_1$  has length $2$.

By construction, the $\tilde{\gamma}_{i1}$ for $i\ge 2$ extend to
subsegments  of the pursuit curve of length $4$ that form a
subsequence of the sequence $\gamma_{i2}$.  From this subsequence
we may extract a further subsequence $\tilde{\gamma}_{i2}$ that
converges to a curve $\rho_2$ of length $4$.  As before,  $\rho_2$ is
a local geodesic. Since the restrictions to $[-1,1]$ of the
$\tilde{\gamma}_{i2}$ converge to $\rho_1$, then $\rho_2$ extends
$\rho_1$.

In this manner, we obtain local geodesics $\rho_k$ of length $2k$ for
any $k$, each an extension of the preceding one;  and hence obtain a
local geodesic $\rho$ which by construction has the desired property.
\end{proof}

 \begin{remark}\label{rem:periodic}
There may be no \emph{periodic} local geodesic $\rho$ with the
property described in Proposition \ref{prop:infinite-geodesic} or
Lemma \ref{lem:periodic}.  To see this, consider the Thue-Morse
infinite binary word, which we write as a sequence of the integers
$1$ and $2$. Let  $\Domain$ be the \catone\ domain given by the
complement in $\R^2$ of two disjoint open disks of radius $1$.  Let
$\gamma:[0,\infty)\to\Domain$ be a local geodesic that winds around
the two boundary circles  $\partial\Domain_1$ and
$\partial\Domain_2$, according to the pattern dictated by
Thue-Morse word.  That is, the appearance of an integer $i\in\{1,2\}$
indicates that $\gamma$ makes positively oriented contact with
$\partial\Domain_i$, and a subword consisting of $k>1$ repeats of the
integer $i$ indicates that $\gamma$ also consecutively performs
$k-1$ complete, positively oriented circuits around
$\partial\Domain_i$.  Our claim is immediate from the fact that no
subword of the Thue-Morse word repeats three times in a row
\cite[Theorem 1.8.1]{AS}. \end{remark}

% ************************************************************************
\section{Continuous-time pursuit}
\label{sec_continuous}

% ************************************************************************

In the continuous version of simple pursuit, $E$ moves along a
rectifiable curve $E(t)$ parametrized  with speed $\le 1$.  It is
assumed that $P(t)$ moves at constant unit speed, and for each $t$,
the right-handed velocity vector $P'(t)$ exists and points along a
geodesic from $P(t)$ to $E(t)$. Thus a continuous pursuit curve is a
time-dependent gradient curve for the distance function from a
moving point $E(t)$.

In the \catk\ setting, we assume that the initial separation satisfies
$L(0)<\pi/\sqrt{K}$. As follows from the First Variation Formula
(\ref{first-variation}), $L(t)$ is non-increasing, where
$L(t)=d(P(t),E(t))$. Hence the geodesic from $P(t)$ to $E(t)$ is
unique. The evader wins if and only if $\lim_{t\to\infty}L(t)>0$.

Suppose we are given a rectifiable curve $E(t)$, with $t \ge 0$ and
speed $\le 1$, and an initial pursuer position $P(0)$ and positive step
size $D$. The \emph{discrete-time pursuit game $P_{D,i}$ generated
by the data $\{E(t), P(0), D\}$} has evader sequence $E_{D,i} = E(iD)$
and initial pursuit point $P_{D,0} = P(0)$. As in section
\ref{sec_Pursuit}, there are corresponding broken geodesic pursuit and
evader curves $P_D(t)$ and $E_D(t)$.   These discrete-time curves do
not form a continuous-time simple pursuit game unless the evader
curve is a geodesic with $P(0)$ on a left-end geodesic extension. We
denote the separation at time $t$ by $L_D(t) = d(P_D(t), E_D(t))$.

Jun has provided a foundation for the theory of continuous simple
pursuit and its approximation by discrete simple pursuit, including
existence, uniqueness and curvature properties of continuous pursuit
curves.  In particular we use the following theorem:

\begin{theorem}[\cite{Jun}]
\label{thm:jun}
In a \catk\ space, let $E(t), t \ge 0$, be a rectifiable curve  with
speed $\le 1$, and  $P(0)$ be an initial pursuer position with initial
separation $L(0)=d(P(0),E(0))<\pi/\sqrt{K}$. Consider the
corresponding discrete-time pursuit games $P_{D(m),i}$ with step
sizes $D(m) = 2^{-m}$. Then the sequence of discrete-time pursuit
curves $P_{D(m)}(t)$ for $E_{D(m)}(t)$ converges to a continuous
unit-speed pursuit curve $P(t)$ for $E(t)$, uniformly on any initial arc
$t \le T$.  Moreover,  $P(t)$ is the unique pursuit curve with initial
position $P(0)$.
\end{theorem}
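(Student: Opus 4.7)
The plan is to construct the continuous pursuit curve as the uniform limit of the dyadic discrete pursuit curves $P_{D(m)}$ on any initial interval $[0,T]$, verify that this limit is genuinely a unit-speed pursuit curve for $E(t)$, and then establish uniqueness, which promotes the subsequential convergence to full convergence. Throughout I would use the hypothesis $L(0)<\pi/\sqrt K$ to remain in the \catk\ regime where geodesics of length $<\pi/\sqrt K$ are unique and depend continuously on their endpoints. As a preliminary step, each $P_{D(m)}$ is a well-defined unit-speed broken geodesic, because the monotonicity argument for the discrete separation (applied to the model triangle in the sphere of curvature $K$ for $\triangle P_{D,i+1}E_{D,i}P_{D,i}$, as in the proof of Theorem \ref{thm:TCbound}) gives $L_{D(m)}(t)\le L(0)<\pi/\sqrt K$ for all $t$, so every geodesic $P_{D,i}E_{D,i}$ is unambiguous. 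Since $d(P_{D(m)}(t),E(0))\le L(0)+t$, all of these curves lie in a common bounded region on $[0,T]$.

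The heart of the argument is a Cauchy estimate: $\sup_{t\le T}d(P_{D(m)}(t),P_{D(m')}(t))\to 0$ as $m,m'\to\infty$. Here I would exploit the \catk\ semi-convexity of the distance function between two simultaneously evolving pursuit trajectories. Setting $\delta(t)=d(P_{D(m)}(t),P_{D(m')}(t))$ and comparing the two schemes one step at a time against a common model configuration in the sphere of curvature $K$, one derives a recursion of the form
\[
\delta\bigl((i{+}1)D\bigr)\le (1+C_1 D)\,\delta(iD)+C_2 D^2,
\]
with constants $C_1,C_2$ depending only on $L(0)$, $K$, and $T$. The $C_2 D^2$ term captures the per-step discrepancy between two schemes issuing from the same pursuer position (of order $D^2$ because both the pursuer's own motion and the evader's motion change the pursuit direction by an angular amount $O(D)$ over a step of length $O(D)$), while the factor $(1+C_1 D)$ bounds how fast two pursuers can separate in one step. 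Iterating over the $O(T/D)$ time steps and invoking a discrete Gronwall inequality gives $\delta\le C(T)D\to 0$, so $\{P_{D(m)}\}$ converges uniformly on $[0,T]$ to a continuous curve $P$. Unit speed of $P$ is inherited from the uniform limit of unit-speed curves together with lower semicontinuity of length; the pursuit-curve condition is verified first at dyadic times (where each $P_{D(m)}$ has the correct direction by construction) and extended to all $t$ using continuity of the geodesic direction map on pairs at distance $<\pi/\sqrt K$, rectifiability of $E$, and the First Variation Formula (\ref{first-variation}).

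For uniqueness, given two continuous pursuit curves $P_1,P_2$ with $P_1(0)=P_2(0)$, I would apply the First Variation Formula (\ref{first-variation}) to $\delta(t)=d(P_1(t),P_2(t))$ and use \catk\ comparison to obtain a differential inequality $\frac{d^{+}}{dt}\delta(t)\le C\,\delta(t)$ on $[0,T]$, which forces $\delta\equiv 0$ by Gronwall. Because every subsequential limit of $\{P_{D(m)}\}$ is a pursuit curve with initial point $P(0)$, and such a curve is unique, the whole sequence converges. The principal obstacle is the one-step Cauchy recursion: translating the qualitative \catk\ convexity of the distance function into an explicit quantitative comparison of two discrete schemes, with constants that remain bounded uniformly in $t\in[0,T]$, is the delicate point. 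The bound $L(0)<\pi/\sqrt K$, preserved under monotonicity, is precisely what keeps the model triangles in the sphere of curvature $K$ non-degenerate and the comparison estimates benign.
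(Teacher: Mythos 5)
The paper does not actually prove Theorem \ref{thm:jun}: it is imported from Jun's work in preparation \cite{Jun}, so there is no in-paper argument to measure yours against. Judged on its own terms, your outline is the natural one --- Euler-scheme convergence via a one-step recursion plus discrete Gronwall, then uniqueness via the First Variation Formula and a continuous Gronwall inequality, which upgrades subsequential to full convergence. The uniqueness half is essentially sound: in the triangle with vertices $P_1(t)$, $P_2(t)$, $E(t)$, the \catk\ angle comparison gives $\cos\alpha_1+\cos\alpha_2\ge -C\,\delta(t)$ with $C$ controlled by $K$ and the (monotone) separations, so $\frac{d^{+}}{dt}\delta\le C\delta$ and $\delta\equiv 0$; in the \catzero\ case $\alpha_1+\alpha_2\le\pi$ even gives monotonicity of $\delta$ outright.

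The Cauchy estimate, which you yourself flag as the delicate point, is where the genuine gaps lie. First, your recursion $\delta\bigl((i{+}1)D\bigr)\le(1+C_1D)\delta(iD)+C_2D^2$ compares two schemes with \emph{different} step sizes $D(m)\ne D(m')$ using a single step length $D$; since the two pursuers update at different times, you must either compare consecutive dyadic refinements (bundling the two substeps of the finer scheme into one step of the coarser, exploiting $D(m)=2D(m{+}1)$) and sum a geometric series, or compare each scheme to a common reference --- the recursion as written is not well posed. Second, the claim that $C_2$ depends only on $L(0)$, $K$, and $T$ is not correct in general: the pursuit direction turns by an angle of order $D/L$ per step, so the local error is $O(D^2/L)$ and degenerates as the separation $L\to 0$; the theorem does not assume the evader wins, so you need either a lower bound on $L$ over $[0,T]$ or a separate argument near capture (e.g.\ that all pursuers are then within $O(L)$ of $E$ and hence of one another). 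Third, ``unit speed is inherited from lower semicontinuity of length'' only yields that $P$ is $1$-Lipschitz, i.e.\ length at most $t-s$ on $[s,t]$; equality requires an extra argument, for instance from the rate of decrease of $d(P(t),E(t_0))$ near $t_0$ via the First Variation Formula. None of these is fatal to the strategy, but each is a substantive piece of the proof that the sketch asserts rather than supplies.
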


An immediate consequence of Theorem \ref{thm:jun} is that
if the continuous evader wins, then eventually the generated discrete
evaders also win. Indeed, if $m$ is sufficiently large then
\begin{equation}\label{eq:separation}
L_{D(m)}(t) > L(t)/2 > D(m).
\end{equation}
Therefore the continuous versions of Theorem \ref{discrete_compact}
on compact domains, and Corollary \ref{cor:escapecircumradius} on
escape circumradius functions $c$, follow directly from these discrete
theorems and   Theorem \ref{thm:jun}.

Moreover, Jun showed that our estimates from the proof of Theorem
\ref{thm:TCbound} can be used to prove the continuous version of
that theorem.   We do not have to assume that the evader wins in
order to get a bound on total curvature of an initial arc $t \le T = nD$
of the pursuer, only that $L_{n-2} > D$, i. e., the evader has not been
caught yet. Then, evidently, we can replace $C$ by the
time-dependent multiplier
\begin{equation}\label{eq:C_D(T)}
C_D(T)=  \sqrt{\frac{5(L_0 - L_{n-1})}{B_D(T)}},
\end{equation}
where $B_D(T) =  \min\{\sin L_{i+1} \sin(L_i - D): i < n-1\}$.

\begin{theorem}[\cite{Jun}]\label{thm:sqrtestimate}
On a \catone\ domain, suppose the evader wins a continuous simple
pursuit with initial distance $L_0 = d(P_0,E_0)<\pi$. Then the total
curvature $\tau(t)$ of the pursuit curve from $P(0)$ to $P(t)$ is
$O(t^{\frac{1}{2}})$.

\end{theorem}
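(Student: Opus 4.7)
The plan is to derive the continuous estimate as a limit of the discrete bound in Theorem \ref{thm:TCbound}, via Jun's approximation theorem (Theorem \ref{thm:jun}) and semi-continuity of total curvature (Theorem \ref{thm:tc-limit}(\ref{tc-semicont})). Fix $T>0$ and consider the discrete pursuit games $P_{D(m),i}$ with step size $D(m)=2^{-m}$; let $\tau_m(T)$ denote the total curvature of $P_{D(m)}|_{[0,T]}$. Since the continuous evader wins, $L_\infty:=\lim_{t\to\infty}L(t)>0$, so by (\ref{eq:separation}) the discrete separations satisfy $L_{D(m)}(t)>L(t)/2>D(m)$ on $[0,T]$ for all sufficiently large $m$; in particular the discrete evader has not been caught by time $T$, so the refined discrete estimate $\tau_m(T)\le C_{D(m)}(T)\sqrt{T}$, with $C_{D(m)}(T)$ from (\ref{eq:C_D(T)}), applies.

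The core step is to bound $C_{D(m)}(T)$ uniformly in $m$ and $T$. By monotonicity of the discrete separations, every $L_{D(m),i}$ with $iD(m)\le T$ lies in the interval $[L_\infty/2,\,L_0]$ for $m$ large, and this interval is contained in $(0,\pi)$ by the hypothesis $L_0<\pi$ together with the winning assumption $L_\infty>0$. Consequently $\sin L_{D(m),i+1}$ and $\sin(L_{D(m),i}-D(m))$ are both bounded below by a positive constant $b$ depending only on $L_0$ and $L_\infty$, giving $B_{D(m)}(T)\ge b$. Since also $L_0-L_{D(m),n-1}\le L_0-L_\infty/2$, we conclude $C_{D(m)}(T)\le C$ for a constant $C=C(L_0,L_\infty)$, and hence $\tau_m(T)\le C\sqrt{T}$ for all $m$ sufficiently large.

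By Theorem \ref{thm:jun}, the polygonal curves $P_{D(m)}|_{[0,T]}$ converge uniformly to $P|_{[0,T]}$, so semi-continuity of total curvature yields
\[
\tau(T)\le\liminf_{m\to\infty}\tau_m(T)\le C\sqrt{T}.
\]
As $T$ is arbitrary, $\tau(t)\in O(t^{1/2})$. The main obstacle is precisely the uniform control of $C_{D(m)}(T)$: losing positivity of either $L_\infty$ or $\pi-L_0$ destroys the lower bound on the relevant sines, and Example \ref{ex:antipodal} already confirms that the hypothesis $L_0<\pi$ is essential in the discrete setting.
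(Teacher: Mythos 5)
Your proposal is correct and follows essentially the same route as the paper: discrete approximation via Theorem \ref{thm:jun} and inequality (\ref{eq:separation}), the refined discrete bound (\ref{eq:C_D(T)}) valid whenever the discrete evader has not yet been caught, and semi-continuity of total curvature from Theorem \ref{thm:tc-limit}(\ref{tc-semicont}). The only cosmetic difference is that you bound $C_{D(m)}(T)$ uniformly in $m$ and $T$ by a constant depending on $L_0$ and $L_\infty$, whereas the paper first computes the limit $C(T)$ as $m\to\infty$ and then lets $T\to\infty$; the content is identical.
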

\begin{proof}
By Theorem \ref{thm:jun} and the inequalities (\ref{eq:separation})
we know that for $m$ sufficiently large, $E_{D(m)}(T)$ is not caught.
Then the limit of the bound on $\tau_{D(m)}(T)$ given by
(\ref{eq:C_D(T)}) is $$C(T)\sqrt{T} = \sqrt{\frac{5(L(0) -
L(T))}{B(T)}}\sqrt{T},$$ where $B(T) = \min\{\sin^2L(0), \sin^2L(T)\}$.
By Theorem \ref{thm:tc-limit} (\ref{tc-semicont}), $$\tau(T) \le
\liminf \tau_{D(m)}(T)\le \lim C_{D(m)}(T) \sqrt T= C(T) \sqrt T.$$
Since the evader wins, then $\tau(t)\le C\sqrt t$ where  $C =
\lim_{T\to\infty} C(T)>0$.
\end{proof}

The continuous case of Theorem \ref{thm:evader-tc} may also be
reduced to the discrete case:

\begin{theorem}
For continuous simple pursuit on a  \catzero\ domain, the total
curvature functions of evader and pursuer satisfy
$$\tau^P(t)\le \tau^E(t)+\pi.$$
\end{theorem}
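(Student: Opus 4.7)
The plan is to reduce to the discrete-time case of Theorem \ref{thm:evader-tc} by exactly the kind of discrete-to-continuous passage used in the proof of Theorem \ref{thm:sqrtestimate}, combining Jun's convergence theorem with monotonicity of total rotation and the semi-continuity clause of Theorem \ref{thm:tc-limit}.

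First I would fix $t>0$ and choose step sizes $D_m\to 0$ with the property that $t$ is an integer multiple of each $D_m$; the concrete choice $D_m=t/2^m$ works. For each $m$ I consider the discrete-time pursuit game generated by the data $\{E,P(0),D_m\}$, with associated polygonal curves $P_{D_m}$ and $E_{D_m}$. The discrete case of Theorem \ref{thm:evader-tc} just proved gives
\[
\tau^{P_{D_m}}(t)\le\tau^{E_{D_m}}(t)+\pi
\]
for every $m$. Because $t=2^m D_m$ is a vertex time of $E_{D_m}$, the polygonal curve $E_{D_m}|_{[0,t]}$ is inscribed in $E|_{[0,t]}$, its interior vertices being the samples $E(iD_m)$ with $0<i<2^m$. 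Monotonicity of total rotation under inscription in \catzero\ domains, noted in Section \ref{sec_TotalCurvature}, therefore yields $\tau^{E_{D_m}}(t)\le\tau^E(t)$, so that
\[
\tau^{P_{D_m}}(t)\le\tau^E(t)+\pi.
\]

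To pass the left-hand side to its continuous counterpart, I invoke Theorem \ref{thm:jun} for the step sizes $D_m$: the polygonal pursuit curves $P_{D_m}$ converge uniformly on $[0,t]$ to the continuous pursuit curve $P$. Since the $P_{D_m}$ are polygonal, Theorem \ref{thm:tc-limit}(\ref{tc-semicont}) gives $\tau^P(t)\le\liminf_m\tau^{P_{D_m}}(t)$, and combining with the previous bound yields $\tau^P(t)\le\tau^E(t)+\pi$.

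The only point requiring a word of justification is that Theorem \ref{thm:jun}, as stated with $D(m)=2^{-m}$, still delivers the required uniform convergence for the sequence $D_m=t/2^m$; but this is immediate from the uniqueness of the continuous pursuit curve asserted by that theorem, since any convergent subsequence of discrete approximations with step sizes tending to zero must have the unique continuous pursuit curve as its limit. Beyond that routine check, there is no significant obstacle: the argument is a short three-line chain joining the discrete inequality, monotonicity on the evader side, and semi-continuity on the pursuer side.
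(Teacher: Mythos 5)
Your proposal is correct and follows essentially the same route as the paper's proof: the discrete inequality of Theorem \ref{thm:evader-tc}, semi-continuity of total curvature (Theorem \ref{thm:tc-limit}(\ref{tc-semicont})) on the pursuer side, and the fact that the generated discrete evader curves are inscribed in $E$ on the evader side. The only (harmless) difference is that you use the one-sided monotonicity of total rotation under inscription in \catzero\ spaces, where the paper invokes the full continuity-under-inscription limit of Theorem \ref{thm:tc-limit}(\ref{tc-continuity}); your extra care in choosing step sizes $t/2^m$ and appealing to uniqueness in Theorem \ref{thm:jun} is reasonable but not something the paper bothers with.
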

\proof  The result is immediate from the following chain of inequalities.
$$\tau^P(t) \le \liminf\tau^P_m(t) \le \liminf\tau^E_m(t) + \pi
= \limsup\tau^E_m(t) + \pi = \tau^E(t) + \pi.$$ The first inequality in
the chain is by Theorem \ref{thm:tc-limit} (\ref{tc-semicont}); the
second is by Theorem \ref{thm:evader-tc}; the next (equality) is by
Theorem \ref{thm:tc-limit} (\ref{tc-continuity}), which applies
because the generated discrete evaders $E_m$ are inscribed in $E$;
finally the last is the definition of total curvature. \qed

Proposition \ref{prop:infinite-geodesic} depended only on the
asymptotic estimate for the total curvature of the pursuit curve,
whose continuous version is Theorem \ref{thm:sqrtestimate}, and on
Lemma \ref{lem:periodic}, which applies to any curve, so we
immediately obtain the continuous case:

\begin{prop}
Let $\Domain$ be a compact \catone\ space.   Suppose the evader
wins in continuous simple pursuit with initial distance
$L_0=d(P_0,E_0)<\pi$.  Then there is a bilaterally infinite local
geodesic in $\Domain$, any finite segment of which is the limit of
segments of the pursuit curve.
 \end{prop}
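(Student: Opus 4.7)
The plan is to reduce the proposition directly to two ingredients already in place in the paper: the continuous asymptotic estimate for the total curvature of a pursuit curve, and the general extraction lemma that produces bilaterally infinite local geodesics from curves of sublinear total curvature in a compact \catone\ space. Since the authors have set up both tools precisely for this purpose, the argument will be short and essentially parallel to the discrete case of Proposition \ref{prop:infinite-geodesic}.

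First, I apply Theorem \ref{thm:sqrtestimate} to the continuous pursuit curve $P(t)$. The hypotheses are met: $\Domain$ is a compact \catone\ space (so in particular a \catone\ domain), the initial separation $L_0 = d(P_0,E_0)$ is less than $\pi$, and the evader wins by assumption. The theorem therefore yields $\tau^P(t) \in O(t^{1/2})$, so the total curvature function of $P$ has sublinear growth. Second, I feed $\gamma = P$ into Lemma \ref{lem:periodic}. That lemma is formulated for an arbitrary curve $\gamma:[0,\infty) \to \Domain$ whose total curvature grows sublinearly on a compact \catone\ domain, which is exactly what the previous step established. The conclusion produces a bilaterally infinite local geodesic $\rho$ in $\Domain$, every finite segment of which arises as the limit (in the sense obtained from compactness and Theorem \ref{thm:tc-limit}) of segments of the pursuit curve. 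This is the stated conclusion of the proposition.

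There is essentially no obstacle: the two results have been engineered to fit together, and the proof is really a two-line citation of Theorem \ref{thm:sqrtestimate} and Lemma \ref{lem:periodic}. The only point worth highlighting is that Lemma \ref{lem:periodic} was deliberately stated for arbitrary curves of sublinear total curvature, rather than for pursuit curves specifically, so that the same extraction procedure (choosing times $t_i \to \infty$ at which the total curvature on widening windows $[t_i - i, t_i + i]$ is vanishingly small, and then diagonally extracting convergent subsequences of uniformly reparametrized subarcs) applies verbatim. No features of the pursuit dynamics beyond the $O(t^{1/2})$ curvature bound are used.
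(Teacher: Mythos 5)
Your proposal is correct and follows exactly the paper's own route: the authors likewise obtain the continuous case by citing Theorem \ref{thm:sqrtestimate} for the $O(t^{1/2})$ total-curvature bound and then invoking Lemma \ref{lem:periodic}, which was stated for arbitrary curves of sublinear total curvature precisely so it would apply here. Nothing is missing.
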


% __________________________________________________________________

\section*{Acknowledgements}

\small This work is funded in part by DARPA \# HR0011-07-1-0002 and
by NSF MSPA-MCS \# 0528086. A preliminary and incomplete
version of this work appeared in the proceedings of the conference
{\em Robotics: Science and Systems 2006.} \normalsize

% __________________________________________________________________

\appendix

\section{Background}

Here we set out the definitions and results that are assumed in the
paper. Further discussion may be found in \cite{BH99,BBI01}.

We consider {\em length spaces}, which are metric spaces for which
the distance between any two points is the infimum of pathlengths
joining them.

A curve $\sigma$ in a length space is a {\em geodesic} if
$d(\sigma(t),\sigma(t'))=|t-t'|$ for any two parameter values $t,t'$,
and a {\em local geodesic} if it is a geodesic when restricted to some
neighborhood of each of its parameter values. A length space is a
{\em geodesic space} if any two points are joined by a geodesic, and a
{\em C-geodesic space} if any two points with distance $<C$ are
joined by a geodesic.

Spaces with curvature bounded above are spaces whose geodesic
triangles are no `fatter' than triangles with the same sidelengths in a
model space of constant curvature, according to the following
definition.
\begin{definition}
A  geodesic metric space is \catzero\ if the distance between any two
points of any geodesic triangle $\triangle pqr$  is no greater than the
distance between the corresponding points of the {\em model
triangle} $\triangle \tilde{p}\tilde{q}\tilde{r}$  with the same
sidelengths in the Euclidean plane $M_0=\euc^2$.

A $(\pi/\sqrt{K})$-geodesic metric space is \catk\ for $K>0$ if  the
distance between any two points of any geodesic triangle $\triangle
pqr$ of perimeter  $< 2\pi/\sqrt{K}$ is no greater than the distance
between the corresponding points of the model triangle $\triangle
\tilde{p}\tilde{q}\tilde{r}$  with the same sidelengths in the
$2$-dimensional Euclidean sphere $M_K$ of radius
$1/\sqrt{K}$.\end{definition}

These definitions may be unified by setting $\pi/\sqrt{K}=\infty$ if
$K= 0$, as we will do from now on. For $K<0$,  \catk\ spaces are
defined similarly by taking $\pi/\sqrt{K}=\infty$ and $M_K$ to be the
hyperbolic plane of curvature $K$. \catk\ spaces for $K<0$ are
automatically \catzero\, and of no further interest in this paper.

Note that rescaling a \catone\ space by multiplying all distances by
$1/\sqrt{K}$, $K>0$, yields a  {\mbox{\sc cat($K$)}}\ space, since
rescaling $M_1$ yields $M_K$.

Since triangles with given sidelengths in the model spaces $M_K$
become fatter as $K$ increases,  it is clear that a {\mbox{\sc
cat($K_1$)}}\ space is also a {\mbox{\sc cat($K_2$)}}\ space for
$K_2>K_1$. It is an easy consequence of the definition that a
\catzero\ space $\Domain$ (respectively, a \catk\ space $\Domain$)
has unique geodesics between any two points (respectively, any two
points with distance $<\pi/\sqrt{K}$), and these geodesics vary
continously with their endpoints.  In particular, $\Domain$   is simply
connected (respectively, the open ball of radius $\pi/\sqrt{K}$ about
any point in $\Domain$  is simply connected).

For a simple example, take $\Domain$ to be the Euclidean plane with
one or more disjoint  open circular disks of radius $1$ removed, where
$\Domain$ is equipped with the length metric. Then $\Domain$ is not
a \catzero\ space but is \catone, since the \catone\ perimeter
condition excludes any triangle that encloses a removed disk, and the
remaining triangles are even thinner than their Euclidean models.
$\Domain$ is not simply connected, while open balls of radius $\pi$
are simply connected since they do not include any boundary circle.
(Since $\Domain$ is locally \catzero, its simply connected covering is
a  \catzero\ space, whose geodesics are sent to the local geodesics of
$\Domain$ by the covering map.)

An effective tool in  \catk\ geometry is {\em Reshetnyak
majorization}, which extends the defining comparison property of
\catk\ spaces:

\begin{theorem}[Reshetnyak \cite{Re68}]
\label{thm:Rmajor}
Let $\gamma$ be a closed curve of length $<2\pi/\sqrt{K}$ in a \catk\
space $X$. Then there is a closed curve $\widetilde\gamma$ which is
the boundary of a convex region $D$ in $M_K$ and a
distance-nonincreasing map $\varphi: D \to X$ such that the
restriction of $\varphi$ to $\widetilde\gamma$ is an
arclength-preserving map onto $\gamma$.
\end{theorem}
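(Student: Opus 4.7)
The plan is to reduce to closed polygonal curves by approximation, handle the base case of a geodesic triangle directly, and then induct on the number of vertices, building the majorizing convex region by successively gluing comparison triangles in $M_K$.

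\textbf{Reduction.} Let $\gamma_n$ be a sequence of closed polygonal curves inscribed in $\gamma$ with maximum segment length tending to $0$. Assuming the polygonal case gives majorizing convex regions $D_n \subset M_K$ and $1$-Lipschitz maps $\varphi_n : D_n \to X$, the $D_n$ have uniformly bounded diameter (since boundary lengths are bounded by $2\pi/\sqrt K$). After fixing the image of $\gamma(0)$ as a common basepoint, Blaschke's selection theorem yields a Hausdorff-convergent subsequence $D_n \to D$ to a convex set, and Arzel\`a--Ascoli applied to the equicontinuous family $\{\varphi_n\}$ produces a limit $\varphi : D \to X$ whose restriction to $\partial D$ is an arclength-preserving parametrization of $\gamma$, by length-convergence of the inscribed polygons.

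\textbf{Base case and induction.} For a geodesic triangle $(p_0, p_1, p_2)$, take $D = \widetilde\triangle(\tilde p_0, \tilde p_1, \tilde p_2) \subset M_K$ to be the comparison triangle, and define $\varphi$ by the radial ruling from $\tilde p_0$: for $\tilde x \in D$, let $\tilde y$ be the intersection of the ray from $\tilde p_0$ through $\tilde x$ with the opposite side, and set $\varphi(\tilde x)$ to be the point on the geodesic $[p_0, \varphi(\tilde y)]$ in $X$ at distance $d(\tilde p_0, \tilde x)$ from $p_0$. Two applications of the \catk\ distance comparison---once between distinct rulings and once along each ruling---yield the $1$-Lipschitz property. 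For a polygon $\gamma = (p_0, p_1, \ldots, p_n)$ with $n \geq 4$, triangulate by the geodesic chords $[p_0, p_i]$, form the comparison triangles $\widetilde T_i = \widetilde\triangle(\tilde p_0, \tilde p_i, \tilde p_{i+1})$ in $M_K$, and glue them successively along their shared edges to build a flat region whose boundary corresponds to $\gamma$. Define $\varphi$ piecewise from the triangle base case; the two definitions agree on each shared edge because both send it to the corresponding geodesic in $X$.

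\textbf{Main obstacle.} The glued fan of comparison triangles need not be convex: the interior angle at some $\tilde p_i$ could exceed $\pi$, and the total angle accumulated at $\tilde p_0$ could even exceed $2\pi$. The crux of the proof is to establish convexity via Alexandrov's lemma, which combines the \catk\ angle comparison (comparison angles dominate the actual angles in $X$) with the triangle inequality for angles between directions at a point of $X$. When the fan fails to be convex at some $\tilde p_i$, one inserts an additional geodesic chord in the original polygon, replacing the problematic pair of adjacent triangles with a single larger comparison triangle and recursing on a polygon of strictly smaller complexity; termination is guaranteed by the decrease in vertex count, and the boundary correspondence with $\gamma$ is preserved throughout. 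Once $D$ is convex, the $1$-Lipschitz property across glued edges is verified by factoring any minimizing segment in $D$ through its crossing of the edge and applying the triangle inequality in $X$ to the two half-distances. I expect the detailed verification of the convexity-restoring recursion, together with tracking that the resulting boundary map remains arclength-preserving, to be the most delicate technical point.
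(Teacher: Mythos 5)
First, a point of orientation: the paper does not prove this statement. It is quoted as background directly from Reshetnyak \cite{Re68}, so there is no in-paper argument to compare against; you are in effect reproving a classical theorem. Your outline does follow the standard architecture of the known proofs (polygonal approximation plus a compactness limit, a line-of-sight map for the solid comparison triangle, and Alexandrov's lemma to restore convexity), so the overall plan is the right one.

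Two points in your convexity-restoration step are genuine gaps as written. First, when the fan has a reflex angle at a boundary vertex $\tilde p_i$, the repaired region cannot be the comparison triangle of a newly inserted chord $[p_{i-1},p_{i+1}]$ of the polygon in $X$: that triangle's side opposite $\tilde p_0$ has length $d(p_{i-1},p_{i+1})$, in general strictly less than $|p_{i-1}p_i|+|p_ip_{i+1}|$, so the boundary map would no longer be arclength-preserving onto $\gamma$. What Alexandrov's lemma actually supplies is the \emph{straightened} triangle whose long side has length equal to the sum of the two replaced boundary arcs, together with a short map from it onto the glued pair; after this replacement the object is no longer a fan of comparison triangles of geodesic subtriangles of the polygon, so ``recursing on a polygon of smaller complexity'' requires restating the induction hypothesis for such generalized configurations. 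Second, and more seriously, $\tilde p_0$ is itself a boundary vertex of your region, so convexity requires its total angle to be at most $\pi$, not $2\pi$; a reflex angle there cannot be removed by merging adjacent fan triangles (the two boundary sides meeting at $\tilde p_0$ lie in the first and last triangles of the fan, which are not adjacent), and every merge at a boundary vertex only increases the apex angle. This is precisely why the standard proofs do not build the whole fan at once but induct by cutting off one corner triangle, gluing its comparison triangle to the convex region majorizing the remaining $(n-1)$-gon, and repairing reflex angles only at the two endpoints of the single gluing edge. Your limiting step and base case are fine in outline, though the $1$-Lipschitz property of the line-of-sight map needs the auxiliary triangle $\triangle(p_0,y_1,y_2)$ in $X$ and a monotonicity argument, a bit more than ``two applications of the distance comparison.''
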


In a \catk\ space, the {\em angle} $\alpha\in [0,\pi]$ between two
geodesic segments starting from a common endpoint is well-defined:
it is the greatest lower bound of the corresponding angles in model
triangles for triangles formed by initial subsegments of the two
geodesics together with the attached third side. The thinness
condition implies that these model angles descend monotonically as
the subsegments are shortened, with the greatest lower bound equal
to the limit.

The angle between geodesic segments with a common origin gives the
\emph{angle distance} between the directions of geodesic segments
emanating from a point. In any \catk\ space, the angle distance
satisfies the triangle inequality.

The {\em First Variation Formula} for \catk\ spaces governs the rate
of change of distance $r(t)$ between unit-speed geodesics
$\gamma_1(t), \gamma_2(t)$, assuming $r < \pi/\sqrt{K}$:
\begin{equation}\label{first-variation}
    \left.\frac{dr}{dt}\right|_0 = -(\cos\alpha_1+\cos\alpha_2),
\end{equation}
where $\alpha_i$ is the angle between the geodesic $\gamma_i$ and
the geodesic joining $\gamma_1(0)$ and $\gamma_2(0)$. We can free
the First Variation Formula from the restriction that $r$ is the
distance between two unit speed geodesics, and assume only that the
two curves have righthand directions, by using the chain rule to insert
speed factors multiplying the terms.


\begin{thebibliography}{99}

\bibitem{AG}
A. Abrams and R. Ghrist. State complexes for metamorphic robot
systems. {\it Intl. J. Robotics Research}, 23, (2004), 809--824.

\bibitem{ABB87}S. Alexander, I. Berg, and R.  Bishop.  The
    Riemannian obstacle problem. {\it Illinois J. Math.}, 31, (1987),
    167--184.

\bibitem{ABB93} S. Alexander, I. Berg, and R.  Bishop.
Geometric curvature bounds in Riemannian manifolds with boundary.
{\it Trans. Amer. Math. Soc.}, 339, (1993), 703--716.

\bibitem{AB98}
S. Alexander and R. Bishop. The Fary-Milnor theorem in Hadamard
spaces. {\it Proc. Amer. Math. Soc.}, 126, (1998), 3427--3436.

\bibitem{ABG:convex}
S. Alexander, R. Bishop, and R. Ghrist. Capture pursuit games
on unbounded domains.  {\it l'Enseignement Math.}, 54, (2008), 1-123.

\bibitem{AS} J.-P. Allouche and J. Shallit. {\em  Automatic sequences.
    Theory, Applications, Generalizations.} Cambridge University Press
    (2003).


\bibitem{BHV}
L. Billera, S. Holmes, and K. Vogtmann. Geometry of the space of
phylogenetic trees. {\em Adv. Applied Math.}, 27, (2001), 733--767.

\bibitem{BH99} M. Bridson and A. Haefliger. {\em Metric Spaces of
Non-positive Curvature}, Springer-Verlag, 1999.

\bibitem{BBI01}
D. Burago, Y. Burago, and S. Ivanov. {\em A Course in Metric Geometry},
Graduate Studies in Mathematics, Vol. 33, Amer. Math. Soc.,
Providence, RI, 2001.

\bibitem{Dek80}
B. Dekster. The length of a curve in a space of curvature $\le
K$, {\em Proc. Amer. Math. Soc.}, 79, (1980), 271--278.

\bibitem{GL}
R. Ghrist and S. LaValle. Nonpositive curvature and Pareto optimal
motion planning. {\it SIAM J. Control Optim.}, 45:5, (2006),
1697--1713.

\bibitem{GP} R. Ghrist and V. Peterson. State complexes for
    reconfigurable systems, {\em Adv. Applied Math.}, 38, (2007), 302-–323.

\bibitem{Guibas}
L. Guibas, J.-C. Latombe, S. LaValle, D. Lin, and R. Motwani. A
visibility-based pursuit-evasion problem, {\em Inter. J. Comput. Geom.
\& Applications}, 9:4-5, (1999), 471--.

\bibitem{HM00}
N. Hovakimyan and A. Melikyan. Geometry of pursuit-evasion on
second order rotation surfaces, {\em Dynamics \& Control}, 10(3), (2000),
297--312.

\bibitem{Isaacs}
R. Isaacs. {\em Differential Games}. Wiley Press, NY, 1965.

\bibitem{IKK}
V. Isler, S. Kannan, and S. Khanna. Locating and capturing an
evader in a polygonal environment, in {\em Proc. Workshop Alg.
Foundations of Robotics}, (2004).

\bibitem{ISS}
V. Isler, D. Sun, and S. Sastry. Roadmap based pursuit-evasion
and collision avoidance, in {\em Proc. Robotics, Systems, \& Science},
(2005).

\bibitem{Jun} C. Jun. Total curvature and simple pursuit on domains
    of curvature bounded above, in preparation.

\bibitem{KR}
S. Kopparty and C. Ravishankar. A framework for pursuit-evasion
games in $\real^n$, {\em Information Proc. Letters}, 96, (2005), 114--122.

\bibitem{Kov96}
A. Kovshov. The simple pursuit by a few objects on the
multidimensional sphere, {\em Game Theory \& Applications II}, L.
Petrosjan and V. Mazalov, eds., Nova Science Publ., (1996), 27--36.

\bibitem{LS}
R. Langevin and J. C. Sifre. Asymptotes des courbes trac\'ees  sur
les vari\'et\'es de Hadamard, {\em Ann. Fac.Sci. Toulouse Math.}, (6){\bf
2}, (1993), No. 3, 375--427.

\bibitem{M99} C. Maneesawarng. {\em Total curvature and length
    estimate for curves in ${\rm CAT}(K)$ spaces}. Ph.D. thesis,
    University of Illinois at Urbana-Champaign (1999).

\bibitem{ML03}
C. Maneesawarng and Y. Lenbury. Total curvature and length
estimate for curves in ${\rm CAT}(K)$ spaces, {\em Differential Geom.
\& Appl.},  19:2,  (2003),  211--222.

\bibitem{KM07} W. Karuwannapatana and C. Maneesawarng. The
    lower semi-continuity of total curvature in spaces of curvature
    bounded above. {\em East-West J. of Math.}, 9:1, (2007), 1--9.

\bibitem{Mel98} A. Melikyan. {\em Generalized Characteristics of First
Order PDEs}, Birkhauser, 1998.

\bibitem{Parsons}
T. Parsons. Pursuit-evasion in a graph, in {\em Theory \&
Application of Graphs}, Y. Alavi and D. Lick, eds., Springer Verlag,
(1976), 426--441.

\bibitem{Re68}
Y. Reshetnyak. Nonexpanding maps in a space of curvature no
greater than $K$, {\em Sibirskii Mat. Zh.} {\bf 9}, (1968), 918--928
(Russian). English translation: Inextensible mappings in a
space of curvature no greater than $K$, {\em Siberian Math. J.}, {\bf
9}, (1968), 683--689.

\bibitem{Sgall}
J. Sgall. Solution of David Gale's lion and man problem,
{\em Theor. Comp. Sci.}, {\bf 259}, (2001), 663--670.

\bibitem{SY92}
I. Suzuki and M. Yamashita. Searching for a mobile intruder in
a polygonal region. {\em SIAM J. Comput.}, 21:5, (1992), 863--888.

\bibitem{TJP}
H. Tanner, A. Jadbabaie, and G. J. Pappas.
Flocking in fixed and switching networks,
{\em IEEE Trans. Aut. Control}, 52(5), (2007), 863--868.

\bibitem{Vidal}
R. Vidalm, O. Shakernia, H. Kim, D. Shim, and S. Sastry.
Probabilistic pursuit-evasion games: theory, implementation, and
experimental evaluation. {\em IEEE Trans. Robotics \& Aut.}, 18, (2002),
662--669.

\end{thebibliography}
\end{document}